\title{Height moduli of elliptic surfaces: Motivic \\ height zeta rationality and Kudla--Millson \\ modularity of Mordell--Weil rank jumps}
\date{}
\author{Jun--Yong Park}
\SetMathAlphabet{\mathcal}{normal}{OMS}{cmsy}{m}{n}
\DeclareFontShape{T1}{mdbch}{m}{up}{<->ssub*cmr/m/n}{}
\newtheorem{thm}{Theorem}[section]
\newtheorem{lem}[thm]{Lemma}
\newtheorem{cor}[thm]{Corollary}
\newtheorem{prop}[thm]{Proposition}
\theoremstyle{definition}
\newtheorem{defn}[thm]{Definition}
\newtheorem{conj}[thm]{Conjecture}
\newtheorem{rmk}[thm]{Remark}
\renewcommand\arraystretch{1.5}
\newcommand{\bZ}{\mathbb{Z}}
\newcommand{\cX}{\mathcal{X}}
\newcommand{\cF}{\mathcal{F}}
\newcommand{\cO}{\mathcal{O}}
\newcommand{\cD}{\mathcal{D}}
\newcommand{\cL}{\mathcal{L}}
\newcommand{\cW}{\mathcal{W}}
\newcommand{\cH}{\mathcal{H}}
\newcommand{\cM}{\mathcal{M}}
\newcommand{\cI}{\mathcal{I}}
\DeclareMathOperator{\rk}{rk}
\newcommand{\Me}{\overline{\mathcal{M}}_{1,1}}
\newcommand{\I}{{\mathop{\rm I}}}
\newcommand{\II}{{\mathop{\rm II}}}
\newcommand{\III}{{\mathop{\rm III}}}
\newcommand{\IV}{{\mathop{\rm IV}}}
\newcommand{\Ac}{\mathcal{A}}
\newcommand{\Oc}{\mathcal{O}}
\newcommand{\Pc}{\mathcal{P}}
\newcommand{\Zc}{\mathcal{Z}}
\newcommand{\lambdavec}{{\vec{\lambda}}}
\newcommand{\Z}{\mathbb{Z}}
\newcommand{\Pb}{\mathbb{P}}
\newcommand{\Ab}{\mathbb{A}}
\newcommand{\Cb}{\mathbb{C}}
\newcommand{\Fb}{\mathbb{F}}
\newcommand{\Gb}{\mathbb{G}}
\newcommand{\Lb}{\mathbb{L}}
\newcommand{\Qb}{\mathbb{Q}}
\newcommand{\Rb}{\mathbb{R}}
\newcommand{\et}{{\acute{et}}}
\DeclareMathOperator{\Stck}{\mathrm{Stck}}
\DeclareMathOperator{\MW}{\mathrm{MW}}
\DeclareMathOperator{\NS}{NS}
\DeclareMathOperator{\Triv}{Triv}
\DeclareMathOperator{\PGL}{PGL}
\DeclareMathOperator{\SO}{SO}
\DeclareMathOperator{\SL}{SL}
\DeclareMathOperator{\Sym}{Sym}
\DeclareMathOperator{\Hom}{Hom}
\DeclareMathOperator{\Aut}{Aut}
\renewcommand{\setminus}{\smallsetminus}
\begin{document}
        
    \vspace{-3ex}

\begin{abstract}
Let $k$ be a perfect field with $\mathrm{char}(k)\neq 2,3$, set $K=k(t)$, and let $\mathcal{W}_n^{\min}$ be the moduli stack of minimal elliptic curves over $K$ of Faltings height~$n$, constructed via the height--moduli framework of~\cite{BPS} applied to $\overline{\mathcal{M}}_{1,1}\simeq\mathcal{P}(4,6)$. The Shioda--Tate formula $\rho(S)=T(S)+\mathrm{rk}(E/K)$ decomposes the Picard rank of the associated elliptic surface into the trivial lattice rank, which is local (determined by Kodaira fiber types), and the Mordell--Weil rank, which is global. The motivic height zeta function weighted by the trivial lattice rank is rational in $s=t^{1/12}$ in the dimensionally completed Grothendieck ring, via a combination of exact Euler products on the isotrivial loci $j\equiv 0, 1728$ and a motivic discriminant stabilization adapting Vakil--Wood~\cite{VW} to $\Delta=4a_4^3+27a_6^2$; over $k=\mathbb{C}$, this yields bidegree-wise Hodge number stabilization. The Kudla--Millson theta correspondence~\cite{KM_theta} shows that the distribution of new Mordell--Weil sections by canonical height is governed by a modular form of weight $6n-2$ for $\SL_2(\mathbb{Z})$. Combining Shepherd-Barron's diagonalization of the Gauss--Manin connection~\cite{SB_Torelli} with Kodaira--Spencer transversality, we establish unconditionally that at every Faltings height~$n\ge 3$ and for every $1 \le r \le \lfloor(10n-2)/(n-1)\rfloor$, there exist infinitely many stable elliptic surfaces with Mordell--Weil rank $\rk(E/K) \ge r$, and that infinitely many canonical heights $\hat{h}(P)=d$ are realized by Mordell--Weil sections.
\end{abstract}

    \maketitle


    
    \vspace{-3ex}

    \section{Introduction}
    \label{sec:Intro}

    Let $k$ be a perfect field with $\mathrm{char}(k)\neq 2,3$, and set $K\coloneqq k(t)$. An elliptic curve $E/K$ determines a relatively minimal elliptic surface with section
    \[
    f \colon S \longrightarrow \Pb^1_k
    \]
    unique up to isomorphism (see \cite{Miranda2, SS} for background on elliptic surfaces).

    \medskip

    The arithmetic of $E/K$ is reflected in the geometry of $S$, and a basic organizing principle is the Shioda--Tate formula~\cite{Shioda}
    \begin{equation}\label{eq:ST}
    \rho(S)\;=\;T(S)\;+\;\rk(E/K),
    \end{equation}
    where $\rho(S)=\rk \NS(S_{\bar k})$ is the \emph{geometric Picard rank}, $T(S)$ is the \emph{rank of the geometric trivial lattice} generated by the zero section, a fiber class, and the components of reducible fibers not meeting the zero section, and $\rk(E/K)$ is the \emph{Mordell--Weil rank}. For the relatively minimal elliptic surfaces $f:S\to \Pb^1_k$ with section considered in this paper, we have $q(S)=0$ and $p_g(S)=n-1$, hence the standard bounds 
    \begin{align}\label{eq:bounds-intro}
    2 \le \rho(S) \le 10n,\qquad
    2 \le T(S) \le 10n,\qquad
    0 \le \rk(E/K) \le 10n-2,
    \end{align}
    where $\rho(S)\le 10n=h^{1,1}(S)$ is the Lefschetz bound over $k=\Cb$ (or in general Igusa's inequality $\rho(S)\le b_2(S)=12n-2$).

    \medskip

    In~\cite{BPS}, Bejleri--Park--Satriano construct height-moduli stacks of rational points on proper polarized cyclotomic stacks.  In the fundamental modular curve example
    \[
    \Me \simeq \Pc(4,6),
    \qquad
    \lambda \simeq \Oc_{\Pc(4,6)}(1),
    \]
    a minimal elliptic curve over $K$ can be viewed as a rational point of $\lambda$--height $n$ on $\Me$ over $K$.  This yields a separated Deligne--Mumford stack of finite type
    \[
    \cW_n^{\min}
    \;\coloneqq\;
    \cW_{n,\Pb^1_k}^{\min}\bigl(\Pc(4,6),\Oc(1)\bigr)
    \]
    parametrizing minimal elliptic curves over $K$ of discriminant degree $12n$. Here a $K$-rational point of $\Me$ of $\lambda$-height $n$ means the stacky height $n$ with respect to the Hodge line bundle $\lambda$, in the sense of \cite[Def.~2.11]{ESZB}. Under the identification $\overline{\cM}_{1,1}\cong \Pc(4,6)$ one has $\lambda\simeq \Oc_{\Pc(4,6)}(1)$, and this height agrees with the Faltings height of the corresponding elliptic curve by \cite[Cor.~7.6]{BPS}.

    \medskip

    Guided by~\eqref{eq:ST}, we introduce the following motivic generating series (see~\cite{Ekedahl} for background on the Grothendieck ring of stacks) refining the height generating series in~\cite[\S 8]{BPS} by weighting each height stratum with the \emph{lattice ranks} of the associated relatively minimal elliptic surface.

    \begin{defn}\label{def:tri-motivic-HZF}
    Let $k$ be a perfect field of characteristic $\neq 2,3$, and consider the height--moduli stack
    \[
      \cW^{\min}_{n} = \cW_{n,\Pb^1_k}^{\min}\bigl(\Pc(4,6),\Oc(1)\bigr)
    \]
    parametrizing minimal elliptic curves over $K = k(t)$ of discriminant height $12n$.  The \emph{formal trivariate height zeta function} is
    \[
      \Zc(u,v;t)
      \;\coloneqq\;
      \sum_{n \ge 0}
        \left(
          \sum_{[E] \in \cW_n^{\min}}
            u^{\,T(S)} \, v^{\,\rk(E/K)}
        \right)
      t^n
    \]
    where for each $[E]\in\cW_n^{\min}$ we write $S\to\Pb^1_k$ for the associated relatively minimal elliptic surface $f:S\to \Pb^1_k$ with section, and:
    \begin{itemize}
      \item $T(S)$ is the rank of the trivial lattice of $S$;
      \item $\rk(E/K)$ is the Mordell--Weil rank.
    \end{itemize}
    The following specializations are the associated \emph{bivariate
    height zeta functions}:
    \begin{align}
      Z_{\Triv}(u;t) &\coloneqq \Zc(u,1;t),\\
      Z_{\MW}(v;t) &\coloneqq \Zc(1,v;t),\\
      Z_{\NS}(w;t) &\coloneqq \Zc(w,w;t).
    \end{align}
    \end{defn}

   The specialization $Z_{\Triv}(u;t)=\Zc(u,1;t)$ is rigorously defined in $K_0(\Stck_k)[u]\llbracket t\rrbracket$, since $T(S)$ is constant on each Kodaira stratum. The specializations $Z_{\MW}(v;t)$ and $Z_{\NS}(w;t)$ are well-defined as weighted point counting series over $\Fb_q$ but not motivically in general: for a given Mordell--Weil rank~$r$, there are infinitely many N\'eron--Severi lattice types realizing that rank, each contributing a separate Noether--Lefschetz stratum~\cite{CDK}, so the rank-$r$ locus is a countable union that is not constructible. We therefore work motivically in~$u$ throughout and treat the $v$- and $w$-gradings as weighted point counting refinements.

    \medskip

    Setting $u=v=1$ forgets the lattice rank grading and specializes to the \emph{univariate motivic height zeta function} $Z_{\lambdavec}(t)=\Zc(1,1;t)\in K_0(\Stck_k)\llbracket t\rrbracket$ and likewise to its inertial refinement $\cI Z_{\lambdavec}(t)$ which encodes the totality of rational points on $\Me$ over $K=k(t)$. \cite[Thm.~8.9]{BPS} shows that both series are in fact rational in $t$, i.e.\ lie in $K_0(\Stck_k)[\Lb^{-1}](t)$, and gives explicit formulas.

    \medskip

    The assumption $\mathrm{char}(k)\neq 2,3$ is used throughout in two essential ways: first, it ensures the existence of the short Weierstrass form $y^2=x^3+a_4x+a_6$ (equivalently, the isomorphism $\Me\simeq \Pc(4,6)$ over $\bZ\left[\frac{1}{6}\right]$); second, it guarantees that the Kodaira--N\'eron fiber classification \cite{Kodaira,Neron} and the Tate correspondence (i.e.\ \textit{Tate's algorithm} \cite{Tate} \textit{via twisted maps}~\cite[Thm.~7.12]{BPS}) apply in their standard form.

    \medskip

    We first focus on $Z_{\Triv}(u;t)$. The key point is that the trivial lattice is governed by
    \emph{local bad reduction}: its rank is determined by the geometric Kodaira fiber configuration of $\pi_{\bar k}\colon S_{\bar k}\to \Pb^1_{\bar k}$. Writing $\Triv(S)\subset \NS(S_{\bar k})$ for the geometric trivial lattice and $T(S)\coloneqq \rk(\Triv(S))$, we have the following explicit formula.

    \begin{lem}\label{lem:T-from-fibers}
    Let $\pi\colon S\to \Pb^1_k$ be a relatively minimal elliptic surface with section, and let
    $\mathfrak f$ be the multiset of singular fibers of $\pi_{\bar k}\colon S_{\bar k}\to \Pb^1_{\bar k}$.
    If $m_v$ denotes the number of irreducible components of the fiber at $v$, then
    \[
    T(S)\;=\;2+\sum_{v\in \mathfrak f}(m_v-1).
    \]
    \end{lem}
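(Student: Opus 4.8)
The plan is to compute $\rk\,\Triv(S)$ directly from its defining generators by showing that the Gram matrix of their intersection pairing on $\NS(S_{\bar k})$ is nonsingular; everything reduces to Kodaira's classification of singular fibres (equivalently, Zariski's lemma). Since $\Triv(S)$ and $T(S)$ are defined geometrically, I would first pass to $\bar k$ and assume $k$ algebraically closed, writing $\pi\colon S\to\Pb^1$ for the base-changed fibration. By definition $\Triv(S)\subseteq\NS(S)$ is generated by the class $O$ of the zero section, the class $F$ of a general fibre, and, for each reducible fibre $\pi^{-1}(v)=\sum_{i=0}^{m_v-1}a_{v,i}\,\Theta_{v,i}$, the classes $\Theta_{v,1},\dots,\Theta_{v,m_v-1}$ of the components not meeting $O$ (I index so that $\Theta_{v,0}$ is the unique component met by the section, which, since a section forces a multiplicity-one component, occurs with $a_{v,0}=1$). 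This is a list of $2+\sum_{v\in\mathfrak f}(m_v-1)$ generators, so it suffices to prove they are linearly independent in $\NS(S)\otimes\Q$.

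Next I would assemble the Gram matrix from the standard intersection numbers: $F^2=0$, $F\cdot O=1$, $F\cdot\Theta_{v,i}=0$ for every fibre component, $O\cdot\Theta_{v,i}=0$ for $i\ge1$ by the choice of indexing, and $\Theta_{v,i}\cdot\Theta_{w,j}=0$ for $v\neq w$ (components over distinct points are disjoint). The matrix is therefore block diagonal: one $2\times2$ block $\left(\begin{smallmatrix}O^2 & 1\\ 1 & 0\end{smallmatrix}\right)$ for $\langle O,F\rangle$ with determinant $-1$ (independent of the value $O^2=-\chi(\Oc_S)$), and one block $M_v$ for each reducible fibre, namely the intersection matrix of $\Theta_{v,1},\dots,\Theta_{v,m_v-1}$. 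Hence $\det(\mathrm{Gram})=-\prod_v\det(M_v)$, and the claim comes down to showing each $M_v$ is nonsingular of rank $m_v-1$.

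This last point is the one genuinely nontrivial input, and I expect it to be the crux. By Zariski's lemma, the intersection form on the free group spanned by the components $\Theta_{v,0},\dots,\Theta_{v,m_v-1}$ of the connected fibre $\pi^{-1}(v)$ is negative semidefinite with radical spanned by the fibre class $\delta_v=\sum_i a_{v,i}\Theta_{v,i}$ — equivalently, by Kodaira's table this intersection matrix is the negative of the affine Cartan matrix of type $\widetilde A_{m_v-1}$, $\widetilde D_{m_v-1}$, $\widetilde E_6,\widetilde E_7,\widetilde E_8$ (or $\widetilde A_1$, $\widetilde A_2$ for $\III$, $\IV$) attached to the Kodaira type. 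Because $\delta_v$ has $\Theta_{v,0}$-coefficient $a_{v,0}=1$, the span of the remaining components $\Theta_{v,1},\dots,\Theta_{v,m_v-1}$ meets the radical trivially, so it injects into the negative-definite quotient by the radical; thus $M_v$ is negative definite, in particular nonsingular of rank exactly $m_v-1$. (Concretely, $M_v$ is the negative of the finite-type Cartan matrix obtained by deleting the identity node from the affine diagram.)

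Putting the two computations together, $\det(\mathrm{Gram})=-\prod_v\det(M_v)\neq0$, so the chosen generators of $\Triv(S)$ are linearly independent and $T(S)=\rk\,\Triv(S)=2+\sum_{v\in\mathfrak f}(m_v-1)$. Finally, I would note that irreducible singular fibres — types $\I_1$ and $\II$ — have $m_v=1$ and contribute nothing, so summing over the multiset $\mathfrak f$ of all singular fibres agrees with summing over the reducible ones only, as in the statement.
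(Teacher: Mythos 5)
Your proof is correct, but for context: the paper does not actually prove Lemma~\ref{lem:T-from-fibers}; it treats the formula as a standard consequence of the Shioda--Tate theory (the reference to Shioda in the Introduction) and invokes it directly in the proof of Proposition~\ref{prop:finite-kodaira-strat}. Your argument is the canonical one: you pass to $\bar k$, list the $2+\sum_v(m_v-1)$ defining generators $O,F,\Theta_{v,1},\dots,\Theta_{v,m_v-1}$, observe that the Gram matrix is block diagonal with the hyperbolic $2\times2$ block $\bigl(\begin{smallmatrix}O^2 & 1\\ 1 & 0\end{smallmatrix}\bigr)$ of determinant $-1$ plus one block $M_v$ per reducible fiber, and then invoke Zariski's lemma to see that each $M_v$ is negative definite because the span of $\Theta_{v,1},\dots,\Theta_{v,m_v-1}$ meets the radical (spanned by the full fiber cycle, whose $\Theta_{v,0}$-coefficient is $1$ since the section forces a multiplicity-one component) only in zero. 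The closing remark that irreducible singular fibers ($\I_1$, $\II$) contribute $m_v-1=0$ correctly reconciles summing over all singular fibers with summing over reducible ones. This is precisely the argument one finds in the standard references (e.g.\ Shioda, or Sch\"utt--Shioda), so you have, in effect, supplied the proof the paper elides.
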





    \begin{defn}\label{def:kodaira-strata}
    Fix $n\ge 1$. For a geometric fiber configuration $\mathfrak f$, the \emph{Kodaira stratum} $\cW_n^{\min,(\mathfrak f)}\subset \cW_n^{\min}$ is the locus parametrizing those $[E]\in\cW_n^{\min}$ whose associated surface $S_{\bar k}\to \Pb^1_{\bar k}$ has singular fiber configuration $\mathfrak f$ $($cf.\ \cite[Thms.~5.1 and 7.12]{BPS}$)$.
    \end{defn}

    \begin{defn}\label{def:Ztriv}
    Fix $n\ge 0$. By Proposition~\ref{prop:finite-kodaira-strat}, $\cW_n^{\min}$ admits a \emph{constructible stratification} by Kodaira data, and $T(S)$ is constant on each stratum. For $n\ge 1$ and each $T$ with $2\le T\le 10n$, let
    \[
    \cW_n^{\min}(T)\subset \cW_n^{\min}
    \]
    be the finite union of those Kodaira strata on which $T(S)=T$ (hence a finite union of locally closed substacks).
    For $n=0$, set $\cW_0^{\min} \coloneqq \cW_0^{\min}(2)$.

    The trivial--lattice--rank--weighted motivic height zeta function is
    \[
    Z_{\Triv}(u;t)\coloneqq \sum_{n\ge 0}\ \sum_{T\ge 2} u^T\,\{\cW_n^{\min}(T)\}\,t^n
    \ \in\ K_0(\Stck_k)[u]\llbracket t\rrbracket.
    \]
    \end{defn}

    We prove that $Z_{\Triv}(u;t)$ is approximately rational in $s=t^{1/12}$ in the dimensionally completed Grothendieck ring, reflecting the local nature of the trivial lattice.

    \begin{thm}\label{thm:intro-rationality-Ztriv}
    Let $k$ be a perfect field with $\mathrm{char}(k)\neq 2,3$, put $s=t^{1/12}$, and let $\widehat\cM_\Lb$ denote the completion of $K_0(\Stck_k)[\Lb^{-1}]$ with respect to the dimensional filtration~\cite{Kontsevich, VW}.
    Then for each $m\ge 2$ and each $N\ge 0$, there exists a rational function $R_{m,N}(s)\in\widehat\cM_\Lb(s)$ such that for all $n$ sufficiently large $($depending on $m$ and $N$$)$, the normalized error
    \[
    \bigl([u^m]Z_{\Triv}(u;t)\big|_{t^n} - R_{m,N}(s)\big|_{s^{12n}}\bigr)\cdot\Lb^{-10n}
    \]
    has dimension $\le -(N+1)$ in $\widehat\cM_\Lb$.
    Equivalently,
    \[
    [u^m]Z_{\Triv}(u;t)
    \;\in\;
    \overline{\widehat\cM_\Lb(s)}
    \;\subset\;
    \widehat\cM_\Lb\llbracket s\rrbracket
    \]
    for each~$m$, where the closure is taken with respect to the
    coefficient-wise dimensional filtration topology.

    \medskip

    The approximating rational functions arise from a three-locus partition of the moduli ($j\equiv 0$, $j\equiv 1728$, and the complement):
    \begin{enumerate}[\normalfont(i)]
    \item On the isotrivial loci $j\equiv 0$ and $j\equiv 1728$, the discriminant is a pure power of a single Weierstrass coefficient, all local conditions are linear, and the resulting Euler products are exact at all heights. These are \emph{strictly} rational: they lie in $K_0(\Stck_k)[\Lb^{-1}](s)$ itself, without completion or approximation.
    \item On the complement where $j\not\equiv 0, 1728$, the additive fiber types are handled by linear independence of vanishing conditions on the Weierstrass coefficients, yielding an Euler product that is exact above an explicit height threshold.
    \item The multiplicative residual $($root multiplicities of $\Delta=4a_4^3+27a_6^2$ at coprime points$)$ is handled by a motivic discriminant stabilization adapting the method of Vakil--Wood~\cite{VW}: a truncated inclusion--exclusion at depth~$N$ produces the rational approximation~$R_{m,N}$, with error vanishing in the dimensional filtration as $N\to\infty$.
    \end{enumerate}
    Setting $u = 1$ recovers the exact Euler product of~\cite[Thm.~8.9]{BPS}: the approximation becomes exact, and strict rationality holds.
    \end{thm}

    Over $k=\Cb$, the Hodge--Deligne motivic measure $\mathrm{HD}\colon K_0(\Stck_\Cb)[\Lb^{-1}]\to \bZ[x,y,(xy)^{-1}]$, $\Lb\mapsto xy$, extracts compactly-supported virtual Hodge numbers $e^{p,q}_c$ from motivic classes. The weight bound in mixed Hodge theory~\cite{Ekedahl} converts the dimensional error bounds of Theorem~\ref{thm:intro-rationality-Ztriv} into exact stabilization at each bidegree.

    \begin{cor}\label{cor:intro-HD-stabilization}
    Let $k=\Cb$, $K=\Cb(t)$, and $s=t^{1/12}$. Fix $m\ge 2$. For each bidegree $(p_0,q_0)\in\bZ^2$, the virtual Hodge number
    \[
    e^{\,p_0,\,q_0}_c\bigl( \bigl\{\cW_n^{\min}(m)\bigr\} \cdot\Lb^{-10n} \bigr)
    \]
    is independent of~$n$ for all $n$ sufficiently large, where the threshold depends on $m$ and $(p_0,q_0)$. In particular, the generating function $\sum_{n\ge 0} e^{\,p_0,\,q_0}_c(\cdots)\,s^{12n}$ is rational in~$s$.
    \end{cor}

    In sharp contrast with the local nature of the trivial lattice, we establish transcendence of the distribution of new algebraic classes by canonical height at every Faltings height $n\ge 2$, using the Kudla--Millson theta correspondence.

    \begin{thm}\label{thm:intro-modularity}
    For each $n\ge 2$, let $\Lambda_n=\langle F,O\rangle^\perp\subset H^2(S,\bZ)$ be the orthogonal complement of the sublattice spanned by the fiber class and zero section, an even unimodular lattice of signature $(2n{-}2,\,10n{-}2)$ and rank $12n-4$. Let $\cD_n$ be the associated period domain.
    \begin{enumerate}[\normalfont(1)]
    \item The generating series of Noether--Lefschetz classes
    \[
    c_{\mathrm{top}}(\cF^\vee)\;+\;\sum_{d\ge 1}[\cH_d]\,q^d\;\in\;H^{2(n-1)}(O(\Lambda_n,\bZ)\backslash\cD_n,\,\Qb)\llbracket q\rrbracket
    \]
    is a holomorphic modular form of weight $6n-2$ for $\SL_2(\bZ)$, valued in cohomology $H^{2(n-1)}(O(\Lambda_n,\bZ)\backslash\cD_n,\,\Qb)$. The Fourier coefficient $[\cH_d]$ is the cycle class of the Heegner locus parametrizing Hodge structures where a lattice vector $\gamma\in\Lambda_n$ with $\gamma^2=-2d$ becomes algebraic.
    \item On any fixed Kodaira stratum $\cW_n^{\min,(\mathfrak{f})}$, the root lattice $R(\mathfrak{f})\subset\Lambda_n$ is constant and algebraic throughout. A class $\gamma\in\Lambda_n$ with $\gamma^2=-2d$ that becomes newly algebraic on the stratum cannot lie in $R(\mathfrak{f})$, hence forces $\rk(E/K)\ge 1$ by the Shioda--Tate exact sequence. The corresponding section has canonical height $\hat{h}(P)\le d$, with equality when all singular fibers are irreducible $($types $\I_1$ and $\II$$)$. The genus-$r$ Siegel theta series restricted to $R(\mathfrak{f})^\perp\cap\Lambda_n$ detects $\rk(E/K)\ge r$ for each $1\le r\le 10n-T(\mathfrak{f})$.
    \item For any test class $\alpha\in H_{2(n-1)}(O(\Lambda_n,\bZ)\backslash\cD_n,\,\Qb)$ for which $\alpha\cap[\cH_d]\neq 0$ for some~$d$, the scalar-valued series $\varphi_{n,\alpha}(q)=\alpha\cap c_{\mathrm{top}}(\cF^\vee)+\sum_{d\ge 1}(\alpha\cap[\cH_d])\,q^d\in M_{6n-2}(\SL_2(\bZ))$ is transcendental over~$\Cb(q)$. Such $\alpha$ exist for all $n\ge 2$.
    \end{enumerate}
    \end{thm}

    Theorem~\ref{thm:intro-modularity} makes precise the structural distinction between the two summands in the Shioda--Tate formula. The trivial lattice rank $T(S)$ is determined by the Kodaira fiber configuration, which depends only on local reduction data at finitely many points of~$\Pb^1$; this \emph{locality} drives the Euler product structure underlying the approximate rationality of $Z_{\Triv}(u;t)$, with the approximation arising from the nonlinearity of the discriminant $\Delta=4a_4^3+27a_6^2$. By contrast, a new Mordell--Weil section is a genuinely \emph{global} object: it is a class in $\Lambda_n$ outside the root lattice that becomes newly algebraic on a fixed Kodaira stratum, and the locus where this happens is a Heegner cycle in the period domain, governed by the theta correspondence. The generating series of these loci, indexed by the canonical height $d$ of the new section, assembles into a modular form whose transcendence over $\Cb(q)$ reflects the genuinely analytic, non-constructible nature of Noether--Lefschetz loci, in sharp contrast to the constructible local conditions governing the trivial lattice.

    \medskip

    The modularity of Noether--Lefschetz generating series for elliptic surfaces, in all Siegel genera, was established by Greer~\cite[Thm.~37]{Greer} by pulling back the Kudla--Millson theorem~\cite{KM_theta} to the period domain. Garcia~\cite{Garcia_spd} gives an alternative proof using Quillen superconnections, working intrinsically on~$\cD_n$. The per-Kodaira-stratum argument that newly algebraic classes on a fixed Kodaira stratum $\cW_n^{\min,(\mathfrak f)}$ have nonzero Mordell--Weil projection uses the constructible stratification from~\cite[Thms.~5.1 and 7.12]{BPS}. For elliptic K3 surfaces with $n=2$ we have $\dim S_{10}(\SL_2(\bZ))=0$, so the modular form is purely Eisenstein: $\varphi_{2,\alpha}\propto E_{10}$ and the Noether--Lefschetz numbers are proportional to $\sigma_9(d)$, as computed by Maulik--Pandharipande~\cite{MP_NL} in the K3 setting. At $n=3$ the cusp form $\Delta E_4\in S_{16}(\SL_2(\bZ))$ contributes nontrivially $($Remark~\ref{rmk:cusp-form}$)$.

    \medskip

    Theorem~\ref{thm:intro-modularity} is a statement on the period domain $O(\Lambda_n,\bZ)\backslash\cD_n$. For $n\ge 3$ the period map $\Phi\colon\cW_n^{\mathrm{st}}\to O(\Lambda_n,\bZ)\backslash\cD_n$ from the \emph{stable stratum} $\cW_n^{\mathrm{st}}\subset \cW_n^{\min}$ $($the open substack parametrizing elliptic surfaces whose singular fibers are all of type~$\I_1$$)$ is far from surjective $($the moduli has dimension $10n-2$ while the period domain has dimension $(n-1)(10n-2)+\tfrac{1}{2}(n-1)(n-2)$$)$, and it is not \emph{a priori} clear that the Heegner loci $\cH_d$ are met by the period image. We show that the infinitesimal period map is transverse to every Heegner locus, and that infinitely many canonical heights are geometrically realized by sections on stable elliptic surfaces.

    \begin{thm}\label{thm:intro-KS}
    Let $k=\Cb$, $K=\Cb(t)$, and $n\ge 3$.
    \begin{enumerate}[\normalfont(1)]
    \item For every primitive $\gamma\in\Lambda_n$ with $\gamma^2<0$, the Kodaira--Spencer map of the universal Weierstrass family over $\cW_n^{\mathrm{st}}$ is transverse to $\gamma^\perp$. In particular, $\Phi^{-1}(\cH_d)$ is either empty or has codimension exactly~$n-1$ in $\cW_n^{\mathrm{st}}$ for every~$d\ge 1$.

    \item For infinitely many $d\ge 1$, there exists a stable elliptic surface $\pi\colon S\to\Pb^1$ at Faltings height~$n$ with $\rk(E_S/K)\ge 1$ and a section of canonical height $\hat{h}(P)=d$.

    \item For every integer $1 \le r \le \lfloor (10n-2)/(n-1) \rfloor$, there exist infinitely many stable elliptic surfaces at Faltings height~$n$ with Mordell--Weil rank $\rk(E_S/K) \ge r$. The bound equals $14$ for $n = 3$ and stabilizes at $10$ for $n \ge 10$.
    \end{enumerate}
    \end{thm}

    The transversality uses the orthogonal basis of $H^{1,1}_{\mathrm{prim}}$ and the diagonalization of the Gauss--Manin connection established by Shepherd-Barron~\cite{SB_Torelli}; the realization combines this with the algebraicity of Noether--Lefschetz loci~\cite{CDK} and the per-Kodaira-stratum Shioda--Tate argument of Theorem~\ref{thm:intro-modularity}(2).

    \medskip

    \begin{rmk}\label{rmk:KS-rank-bound}
    The bound $r \le \lfloor (10n-2)/(n-1)\rfloor$ is the ratio of $\dim\cW_n^{\mathrm{st}} = 10n-2$ to the codimension $h^{2,0}(S) = n-1$ of each Heegner locus in $\cD_n$: each algebraic class imposes $n-1$ independent conditions on $\cW_n^{\mathrm{st}}$ to become Hodge, and $r(n-1) \le \dim\cW_n^{\mathrm{st}}$ is the room. The ratio is $14$ at $n=3$ and stabilizes to $10$ as $n\to\infty$. Any generic local deformation approach is bounded by this ratio. The known rank records exceed it via Delsarte constructions with large cyclic symmetry --- Shioda's isotrivial $r = 68$ at $n = 60$ \cite{Shioda92} and the Stiller--Shioda non-isotrivial $r = 56$ at $n = 210$ \cite{Shioda86, Stiller87} --- which exploit special automorphisms unavailable to generic deformations. Whether Mordell--Weil rank is unbounded over $\Cb(t)$ as $n \to \infty$ remains open~\cite[Problem~13.1, Remark~13.29]{SS2}. The analogous question over $\Fb_q(t)$ has a positive answer by Tate--Shafarevich~\cite{TS67} and Ulmer~\cite{Ulmer}, via Frobenius-induced algebraic cycles unavailable in characteristic zero.
    \end{rmk}


    \subsection{Proof methods for Theorem~\ref{thm:intro-rationality-Ztriv}}

    The proof combines a motivic local-to-global factorization in the style of Kapranov~\cite{Kapranov, LL} with the twisted-map stratification of the height--moduli $\cW_n^{\min}$ from~\cite{BPS} and the evaluation morphisms of~\cite{BhM}. Throughout we use the Bejleri--Park--Satriano correspondence~\cite[Thm.~3.3]{BPS} between rational points, minimal weighted linear series, and twisted morphisms; in particular, local reduction conditions are encoded by representable twisted morphisms to $\Me\simeq \Pc(4,6)$, yielding a moduli-theoretic Tate correspondence compatible with the minimal model program. Unordered collections of local factors supported at distinct points of $\Pb^1$ are governed by symmetric powers $\Sym^N(\Pb^1)$. We reorganize these symmetric-power contributions using the power structure on the Grothendieck ring $K_0(\Stck_k)[\Lb^{-1}]$, and state the resulting identity in Lemma~\ref{lem:eval-factor-Ztriv}.

    \medskip

    The additive local factors impose linear vanishing conditions on the Weierstrass coefficients $(a_4,a_6)$, and the sparsity argument of Lemma~\ref{lem:eval-factor-Ztriv} shows these are independent at large heights. On the isotrivial loci $j\equiv 0$ and $j\equiv 1728$---the same loci that produce the lower order main terms in~\cite[Thm.~9.7]{BPS}---the discriminant reduces to a pure power of a single Weierstrass coefficient, and all local conditions become linear. These loci admit their own exact Euler products (Proposition~\ref{prop:isotrivial-euler}), with no correction at any height.

    \medskip

    The multiplicative sector (cusp families $\I_k$ and $\I_k^*$) presents a fundamental obstruction: the discriminant $\Delta=4a_4^3+27a_6^2$ is a \emph{nonlinear} function of the Weierstrass data, and the source space $H^0(\cO(4n))\oplus H^0(\cO(6n))$ (dimension $10n+2$) is too small to independently prescribe discriminant root multiplicities at all singular points (the total discriminant degree is $12n$). The motivic transversality of Lemma~\ref{lem:heart} handles the regime where the multiplicative sector carries at most $\frac{1}{3}$ of the discriminant degree, but the generic semistable elliptic surface lies outside this range. To resolve this, we adapt the motivic stabilization method of Vakil--Wood~\cite{VW}: for each fixed trivial lattice rank~$m$, there are finitely many ``visible'' fiber configurations (those contributing positively to $m$), and the residual simple roots of~$\Delta$ are handled by a truncated inclusion--exclusion that converges in the dimensional completion~$\widehat\cM_\Lb$ (Proposition~\ref{prop:discriminant-stabilization}). At $u=1$ the trivial lattice grading is forgotten, the visible/invisible distinction collapses, and both the small-height additive corrections and the discriminant stabilization errors vanish identically, recovering the exact Euler product of~\cite[Thm.~8.9]{BPS}.


    \medskip


    \section{Approximate rationality of the trivial lattice specialization}
    \label{sec:Ztriv}

    Throughout, let $k$ be a perfect field with $\mathrm{char}(k)\neq 2,3$, set $K=k(t)$, and let
    $\pi\colon S\to \Pb^1_k$ be the relatively minimal elliptic surface with section associated to $E/K$.
    Write $\Triv(S)\subset \NS(S_{\bar k})$ for the geometric trivial lattice and $T(S)=\rk\Triv(S)$.

    \begin{prop}\label{prop:finite-kodaira-strat}
    Fix $n\ge 1$. The discriminant degree constraint $\sum_v e(F_v)=12n$ implies that only finitely many
    geometric fiber configurations $\mathfrak f$ occur among surfaces parametrized by
    $\cW_n^{\min}$. Consequently,
    \[
    \cW_n^{\min}=\bigsqcup_{\mathfrak f}\cW_n^{\min,(\mathfrak f)}
    \]
    is a constructible stratification. Moreover, the trivial lattice rank $T(S)$
    is constant on each stratum $\cW_n^{\min,(\mathfrak f)}$.
    \end{prop}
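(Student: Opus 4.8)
The plan is to establish the three assertions --- finiteness of the set of fiber configurations, the resulting constructible stratification, and local constancy of $T(S)$ --- in that order, using the discriminant degree constraint as the single quantitative input and Lemma~\ref{lem:T-from-fibers} for the last point. First I would recall that for $[E]\in\cW_n^{\min}$ the associated relatively minimal elliptic surface $f\colon S\to\Pb^1_k$ has discriminant $\Delta\in H^0(\Pb^1_k,\Oc(12n))$ (equivalently, the height $n$ translates under the Bejleri--Park--Satriano correspondence \cite[Thm.~3.3]{BPS} into $\deg\Delta=12n$), so the total Euler number satisfies $\sum_{v}e(F_v)=12n$ by Kodaira's formula $e(S)=\sum_v e(F_v)$ together with $e(S)=12n$ for these surfaces. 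Since each singular fiber contributes $e(F_v)\ge 1$, there are at most $12n$ singular points on $\Pb^1_{\bar k}$, and each Kodaira type that can occur has $e(F_v)\le 12n$; as the list of Kodaira types with a given bounded Euler number is finite (the only infinite families, $\I_b$ and $\I_b^*$, are pinned down by $e(\I_b)=b$ and $e(\I_b^*)=b+6$, hence $b\le 12n$), only finitely many multisets $\mathfrak f$ of Kodaira types with $\sum_v e(F_v)=12n$ arise. This gives the first claim.

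For the stratification, I would invoke \cite[Thm.~7.16]{BPS} (referenced in Definition~\ref{def:kodaira-strata}): the locus $\cW_n^{\min,(\mathfrak f)}\subset\cW_n^{\min}$ of points whose associated geometric surface has fiber configuration $\mathfrak f$ is a locally closed substack, since fiber type is read off from the vanishing orders of $(c_4,c_6,\Delta)$ --- equivalently from the representable twisted morphism to $\Pc(4,6)$ --- and these vanishing-order conditions cut out locally closed loci in the moduli of twisted stable maps. Because a given $[E]$ determines a unique $S_{\bar k}\to\Pb^1_{\bar k}$ up to isomorphism, every geometric point of $\cW_n^{\min}$ lies in exactly one $\cW_n^{\min,(\mathfrak f)}$, so these substacks are pairwise disjoint and, by the finiteness just proved, their (finite) union is all of $\cW_n^{\min}$. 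Hence $\cW_n^{\min}=\bigsqcup_{\mathfrak f}\cW_n^{\min,(\mathfrak f)}$ is a finite constructible stratification.

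For the final assertion, local constancy of $T(S)$, I would apply Lemma~\ref{lem:T-from-fibers}: $T(S)=2+\sum_{v\in\mathfrak f}(m_v-1)$, where $m_v$ is the number of irreducible components of the fiber at $v$. The number $m_v$ depends only on the Kodaira type of the fiber at $v$ (e.g.\ $m_v=b$ for type $\I_b$, $m_v=b+5$ for $\I_b^*$, $m_v=1$ for $\II$, etc.), so the right-hand side is a function of the multiset $\mathfrak f$ alone. Since $\mathfrak f$ is by definition constant on $\cW_n^{\min,(\mathfrak f)}$, so is $T(S)$. The main obstacle --- really the only non-formal point --- is the precise moduli-theoretic justification that the fiber-type loci are locally closed substacks of the height--moduli stack rather than merely constructible subsets of its set of geometric points; this is exactly where one leans on the moduli-theoretic Tate's algorithm of \cite[Thm.~7.12, Thm.~7.16]{BPS}, which expresses each Kodaira stratum via a representability condition on the twisted map together with vanishing orders at the contracted components, all of which are locally closed conditions in families. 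Everything else is bookkeeping with Kodaira's classification and Euler numbers.
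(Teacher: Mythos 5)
Your proof follows the same route as the paper's: bound the total Euler number by $12n$ via the discriminant degree, observe that only the $\I_k$ and $\I_k^*$ families are unbounded and that their parameter $k$ is controlled by $e(\I_k)=k$, $e(\I_k^*)=k+6$, invoke \cite[Thm.~7.16]{BPS} for local closedness of the Kodaira strata, and conclude constancy of $T(S)$ from Lemma~\ref{lem:T-from-fibers} together with the fact that $m_v$ is determined by the Kodaira symbol. The only difference is cosmetic: you spell out a few extra details (the per-type values of $m_v$, the emphasis on representability and vanishing orders in the moduli-theoretic Tate's algorithm), but the decomposition of the argument and all the cited inputs agree with the paper.
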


    \begin{proof}
    The Kodaira--N\'eron classification gives $e(\I_k)=k$, $e(\I_k^*)=k+6$,
    and $e(F_v)\in\{2,3,4,6,8,9,10\}$ for the remaining types.
    Since $\sum_v e(F_v)=12n$, only finitely many multisets of Kodaira symbols
    can occur, yielding the finite stratification by~\cite[Thms.~5.1 and 7.12]{BPS}.
    Constancy of $T(S)$ on each stratum follows from Lemma~\ref{lem:T-from-fibers}.
    \end{proof}

    \subsection{A multivariate height series}

    We briefly recall the local indexing used in the twisted-maps description of height--moduli. By \cite[Thm.~5.1]{BPS} the height-$n$ moduli stack $\cM_{n,C}(\cX,\cL)$ on a proper polarized cyclotomic stack $\cX$ with polarizing line bundle $\cL$ admits a finite stratification by locally closed substacks indexed by admissible local conditions and degrees: there is a finite disjoint union of morphisms
    \[
    \bigsqcup_{\Gamma,d}\ \cH^{\Gamma}_{d,C}(\cX,\cL)/S_\Gamma \longrightarrow \cM_{n,C}(\cX,\cL),
    \]
    where $\cH^{\Gamma}_{d,C}(\cX,\cL)$ is the moduli stack of representable twisted morphisms of stable height $d$ to $(\cX,\cL)$ with local twisting conditions
    \[
    \Gamma=\bigl(\{r_1,a_1\},\ldots,\{r_s,a_s\}\bigr),
    \]
    recording the stabilizer orders $r_i$ and the corresponding characters $a_i$ at the stacky marked points of the source root stack. The indices $(\Gamma,d)$ range over those satisfying the height decomposition formula
    \[
    n \;=\; d \;+\; \sum_{i=1}^s \frac{a_i}{r_i}.
    \]
    Here $S_\Gamma\subset S_s$ is the subgroup permuting stacky marked points of the same local type.

    \begin{defn}\label{def:local-patterns}
    For the Euler-product argument it is useful to distinguish \emph{local factor types} from \emph{evaluation labels}.
    Let $\cI\Me\simeq \cI\Pc(4,6)$ be the inertia stack of $\Me\simeq\Pc(4,6)$ over $\bZ\left[\frac{1}{6}\right]$ with $\cL=\cO(1)$ the Hodge line bundle (see \cite[\S 2]{HP2} for background on inertia stacks).


    \bigskip
    \noindent\emph{(1) Local factor types.}
    Let $J$ denote the finite set of local factor types occurring in the Tate algorithm stratification via twisted maps
    (see \cite[\S7]{BPS}); concretely one may take
    \[
    J=\Bigl\{
    \II,\ \III,\ \IV,\ \II^*,\ \III^*,\ \IV^*,\
    \I_0^*(j\neq 0,1728),\ \I_0^*(j=0),\
    \I_0^*(j=1728),\ \I_\bullet,\ \I_\bullet^*
    \Bigr\},
    \]
    where $\I_\bullet$ and $\I_\bullet^*$ are the two cusp \emph{shapes} over $j=\infty$.

    \bigskip
    \noindent\emph{(2) Evaluation labels.}
    Let $\Ac$ denote the set of evaluation labels used to index evaluation conditions, i.e.\ the inertia components
    in which the evaluation maps land. Away from the cusp $j=\infty$, the inertia label determines the Kodaira symbol, so the
    non-cusp labels form a finite set
    \[
    \Ac_{\mathrm{nc}}
    =
    \Bigl\{
    \II,\ \III,\ \IV,\ \II^*,\ \III^*,\ \IV^*,\
    \I_0^*(j\neq 0,1728),\ \I_0^*(j=0),\
    \I_0^*(j=1728)
    \Bigr\}.
    \]
    At the cusp $j=\infty$, the inertia label records only the cusp shape ($\I_\bullet$ or $\I_\bullet^*$); the additional integer $k\ge 1$ (contact order with the boundary, equivalently the pole order of~$j$) is \emph{not} part of the local twisting conditions~$\Gamma$ of~\cite[Def.~3.1]{BPS}: for~$\I_k$ one has $(r,a)=(0,0)$ so no stacky marking appears, while for~$\I_k^*$ one has $(r,a)=(2,1)$ independently of~$k$.  The contact order is instead determined by the discriminant valuation of the Weierstrass model, equivalently the pole order of the $j$-map at~$j=\infty$ (\cite[Thm.~7.12]{BPS}).  In the generating function~$\cH(s;\mathbf x)$, the contact order becomes a free summation variable, collapsed by geometric resummation (Lemma~\ref{lem:cusp-resum}). Accordingly we set
    \[
    \Ac
    \;\coloneqq\;
    \Ac_{\mathrm{nc}}
    \ \sqcup\
    \{\I_\bullet,\ \I_\bullet^*\}.
    \]

    \bigskip

    For $\alpha\in\Ac_{\mathrm{nc}}$, let $m(\alpha)\in\Z_{\ge 1}$ be the number of irreducible components of the corresponding Kodaira fiber, so that $m(\alpha)-1$ is its contribution to the trivial lattice. For the cusp shapes $\I_\bullet$ and $\I_\bullet^*$, the component number depends on the contact order $k\ge 1$; this $k$-dependence is again incorporated by geometric resummation. Note that the same Kodaira symbol may correspond to distinct inertia components; for example, $\I_0^*$ splits according to whether $j\in\{0\}$, $j\in\{1728\}$ or $j\notin\{0,1728\}$.
    \end{defn}

    \begin{defn}\label{def:multivariate-H}
    Fix an auxiliary variable $s$ with $s^{12}=t$.
    Introduce variables $\{x_\alpha\}_{\alpha\in\Ac}$ and define
    \begin{equation}\label{eq:H-def-Ztriv}
    \cH(s;\mathbf x)
    \;\coloneqq\;
    \sum_{n \ge 0}\ \sum_{\mathfrak f}
    \Biggl(
    \prod_{v\in\mathfrak f} x_{\alpha_v}
    \Biggr)
    \bigl\{\cW^{\min,(\mathfrak f)}_n\bigr\} s^{12n}
    \;\in\;
    K_0(\Stck_k)[\mathbf x]\llbracket s\rrbracket,
    \end{equation}
    where for fixed $n$ the inner sum ranges over the finitely many geometric fiber configurations $\mathfrak f$ occurring in height $n$.

    For each singular fiber $F_v$ in $\mathfrak f$, let $\alpha_v\in\Ac$ denote the corresponding inertia/evaluation label. Away from the cusp $j=\infty$ this label is the Kodaira symbol, while over $j=\infty$ it records only the cusp shape $\I_\bullet$ or $\I_\bullet^*$.
    The additional contact order $k\ge1$ at the cusp is \emph{not} recorded by the variables $x_\alpha$.
    \end{defn}

    \noindent
    \emph{Kapranov zeta function.}
    For a $k$--variety (or Deligne--Mumford stack) $X$, we write
    \[
    \zeta_X(y)\;\coloneqq\;\sum_{N\ge 0}\bigl\{\Sym^N(X)\bigr\}\,y^N
    \ \in\ K_0(\Stck_k)\llbracket y\rrbracket
    \]
    for the Kapranov motivic zeta function \cite{Kapranov}.
    For $X=\Pb^1$ one has $\zeta_{\Pb^1}(y)=1/\bigl((1-y)(1-\Lb\,y)\bigr)$.

    \subsection{Cusp resummation and motivic transversality}

    \begin{lem}\label{lem:cusp-resum}
    Let $R$ be a commutative ring.

    \bigskip
    \noindent\emph{(1) Geometric resummation}
    Fix $A\in R$. For integers $a,c\ge1$ and $b,d \in \bZ$, one has in $R\llbracket u,t\rrbracket$
    \begin{equation}\label{eq:geom-resum}
    \sum_{k\ge1} A\,u^{ak+b}t^{ck+d}
    =
    A\,u^{a+b}t^{c+d}\cdot \frac{1}{1-u^{a}t^{c}}.
    \end{equation}
    Moreover, if $k_1,\dots,k_M\ge1$ are independent and contribute multiplicatively with the same step $(a,c)$, then
    \begin{equation}\label{eq:geom-resum-power}
    \sum_{k_1,\dots,k_M\ge1}
    A\prod_{i=1}^{M}u^{ak_i+b}t^{ck_i+d}
    =
    A\Bigl(u^{a+b}t^{c+d}\Bigr)^{M}\cdot \frac{1}{(1-u^{a}t^{c})^{M}}.
    \end{equation}
    Equivalently, each marking contributes one factor $(1-u^{a}t^{c})^{-1}$, so $M$ such markings contribute the power
    $(1-u^{a}t^{c})^{-M}$, up to the monomial shift $\bigl(u^{a+b}t^{c+d}\bigr)^M$.

    \bigskip
    \noindent\emph{(2) Cusp shapes for $Z_{\Triv}$}
    Assume $\mathrm{char}(k)\neq2,3$ and work in $R=K_0(\Stck_k)[\Lb^{-1}]$.
    Introduce an auxiliary variable $s$ with $t=s^{12}$, so that $t^n$ corresponds to $\deg(\Delta)=12n$, while $s$ records the
    integral discriminant degree $\deg(\Delta)$.

    \bigskip
    After specializing $x_\beta=u^{m(\beta)-1}$ for $\beta\in\Ac_{\mathrm{nc}}$, a cusp marking of shape $\I_\bullet$
    (resp.\ $\I_\bullet^*$) with contact order $k\ge1$ contributes weight
    $u^{k-1}s^{k}$ (resp.\ $u^{k+4}s^{k+6}$), since
    \[
    m(\I_k)-1=k-1,\qquad v(\Delta)=k,
    \qquad
    m(\I_k^*)-1=k+4,\qquad v(\Delta)=k+6.
    \]
    Hence summing over $k\ge1$ at a single cusp marking gives, in $R\llbracket u,s\rrbracket$,
    \begin{equation}\label{eq:cusp-substitutions}
    x_{\I_\bullet}=\sum_{k\ge1}u^{k-1}s^{k}=\frac{s}{1-us},
    \qquad
    x_{\I_\bullet^*}=\sum_{k\ge1}u^{k+4}s^{k+6}=\frac{u^{5}s^{7}}{1-us}.
    \end{equation}
    In particular, each cusp marking of either shape contributes one factor $(1-us)^{-1}$ after resummation. Thus a factor type $j$
    with $\beta_{j,\I_\bullet}$ markings of shape $\I_\bullet$ and $\beta_{j,\I_\bullet^*}$ markings of shape $\I_\bullet^*$ contributes
    the cusp factor
    \[
    (1-us)^{-(\beta_{j,\I_\bullet}+\beta_{j,\I_\bullet^*})},
    \]
    together with the monomial shift
    \[
    u^{5\beta_{j,\I_\bullet^*}}\,s^{\beta_{j,\I_\bullet}+7\beta_{j,\I_\bullet^*}}
    \]
    coming from \eqref{eq:cusp-substitutions}.
    \end{lem}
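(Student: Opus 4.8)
The plan is to prove the two parts in turn: part~(1) is a bare formal power series identity, and part~(2) is its application to the two cusp families once the Kodaira bookkeeping is in place.

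For part~(1) I would simply recognise the geometric series. Factoring the constant monomial out of the left side of \eqref{eq:geom-resum} gives $\sum_{k\ge1}Au^{ak+b}t^{ck+d}=Au^{b}t^{d}\sum_{k\ge1}(u^{a}t^{c})^{k}$; since $a,c\ge1$, the element $u^{a}t^{c}$ is $t$-adically (indeed $(u,t)$-adically) topologically nilpotent in $R\llbracket u,t\rrbracket$, so $\sum_{k\ge1}(u^{a}t^{c})^{k}=u^{a}t^{c}(1-u^{a}t^{c})^{-1}$ is a well-defined element of $R\llbracket u,t\rrbracket$, which is \eqref{eq:geom-resum}. For \eqref{eq:geom-resum-power}, the sum over $(k_{1},\dots,k_{M})\in\Z_{\ge1}^{M}$ of the product $\prod_{i}u^{ak_{i}+b}t^{ck_{i}+d}$ factors, after pulling $A$ out, by interchanging the (summable, in the $t$-adic sense) summations into the $M$-fold product of identical single-index sums, each equal to $u^{a+b}t^{c+d}(1-u^{a}t^{c})^{-1}$; this yields $A\bigl(u^{a+b}t^{c+d}\bigr)^{M}(1-u^{a}t^{c})^{-M}$, and, read off one index at a time, the ``one factor $(1-u^{a}t^{c})^{-1}$ per marking'' statement. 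As written this needs $b,d\ge0$; the only place a shift by $-1$ intervenes is the $\I_\bullet$ substitution below, and there I will reindex $j=k-1\ge0$, noting that the resulting series $s/(1-us)$ has $s^{n}$-coefficient $u^{n-1}$ and so legitimately lies in $R[u]\llbracket s\rrbracket\subset R\llbracket u,s\rrbracket$.

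For part~(2) the content is the contact-order dictionary, which I would import from Remark~\ref{rmk:cusp-k} together with the Kodaira--N\'eron classification: a fiber of type $\I_{k}$ ($k\ge1$) has $m(\I_{k})=k$ irreducible components and $v(\Delta)=k$, with classifying map $\varphi_{g}\colon\Pb^{1}\to\Me\simeq\Pc(4,6)$ meeting the boundary with contact order exactly $k$; a fiber of type $\I_{k}^{*}$ has $m(\I_{k}^{*})=k+5$, hence $m(\I_{k}^{*})-1=k+4$, and $v(\Delta)=k+6$, again with boundary contact order $k$ at $j=\infty$. Applying the specialization $x_{\beta}=u^{m(\beta)-1}$ for $\beta\in\Ac_{\mathrm{nc}}$ and the normalization $t=s^{12}$ in which $s$ records the integral discriminant degree, a single local fiber contributes the monomial $u^{(\text{trivial-lattice increment})}s^{v(\Delta)}$. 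Thus a cusp marking of shape $\I_\bullet$ with contact order $k$ carries weight $u^{m(\I_{k})-1}s^{v(\Delta)}=u^{k-1}s^{k}$, and one of shape $\I_\bullet^{*}$ carries $u^{m(\I_{k}^{*})-1}s^{v(\Delta)}=u^{k+4}s^{k+6}$. Summing over $k\ge1$ by part~(1) (with the reindexing in the first case) gives
\[
x_{\I_\bullet}=\sum_{j\ge0}u^{j}s^{j+1}=\frac{s}{1-us},
\qquad
x_{\I_\bullet^{*}}=u^{4}s^{6}\sum_{k\ge1}(us)^{k}=\frac{u^{5}s^{7}}{1-us},
\]
which is \eqref{eq:cusp-substitutions}; each cusp shape produces exactly one factor $(1-us)^{-1}$.

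Finally, to assemble the contribution of a local factor type $j$ I would use that the contact orders attached to distinct cusp markings on a twisted-maps chart are independent admissible boundary data, so by the multiplicative form \eqref{eq:geom-resum-power}---equivalently, by multiplying the single-marking answers just computed---a factor type $j$ with $\beta_{j,\I_\bullet}$ markings of shape $\I_\bullet$ and $\beta_{j,\I_\bullet^{*}}$ of shape $\I_\bullet^{*}$ contributes
\[
\Bigl(\tfrac{s}{1-us}\Bigr)^{\beta_{j,\I_\bullet}}\Bigl(\tfrac{u^{5}s^{7}}{1-us}\Bigr)^{\beta_{j,\I_\bullet^{*}}}
=u^{5\beta_{j,\I_\bullet^{*}}}\,s^{\,\beta_{j,\I_\bullet}+7\beta_{j,\I_\bullet^{*}}}\cdot(1-us)^{-(\beta_{j,\I_\bullet}+\beta_{j,\I_\bullet^{*}})},
\]
namely the cusp factor $(1-us)^{-(\beta_{j,\I_\bullet}+\beta_{j,\I_\bullet^{*}})}$ together with the stated monomial shift. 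I do not expect a genuine obstacle: this is a packaging lemma, and the only real care needed is the bookkeeping---keeping the three quantities (contact order $k$, component count $m$, discriminant valuation $v(\Delta)$) matched to the conventions of Remark~\ref{rmk:cusp-k}, and handling the $s$-versus-$t$ normalization ($t=s^{12}$, with $s$ tracking integral $\deg\Delta$) coherently with the symmetric-power resummation of Lemma~\ref{lem:eval-factor-Ztriv}---so that these cusp factors slot correctly into the finite Euler product of Theorem~\ref{thm:rationality-Ztriv}.
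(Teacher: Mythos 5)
Your proposal is correct and takes essentially the same route as the paper's proof: recognize the geometric series for part~(1), factor the $M$-fold sum as a product of $M$ identical single-index sums for \eqref{eq:geom-resum-power}, and apply this to the two cusp weights $u^{k-1}s^{k}$ and $u^{k+4}s^{k+6}$ with step $(a,c)=(1,1)$ for part~(2). You are in fact slightly more careful than the paper at one point: the $\I_{\bullet}$ weight $u^{k-1}s^{k}$ corresponds to $(b,d)=(-1,0)$, which does not satisfy the hypothesis $b\ge 0$ of part~(1) as literally stated, and the paper glosses over this; your reindexing $j=k-1\ge 0$ (together with the observation that $s/(1-us)=\sum_{n\ge1}u^{n-1}s^{n}$ lies in $R[u]\llbracket s\rrbracket\subset R\llbracket u,s\rrbracket$) is a clean fix. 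Otherwise the content and bookkeeping (contact-order dictionary from Remark~\ref{rmk:cusp-k}, $m(\I_k)-1=k-1$, $m(\I_k^*)-1=k+4$, $v(\Delta)=k$ resp.\ $k+6$, and the $t=s^{12}$ grading) match the paper exactly.
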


    \begin{proof}
    For \eqref{eq:geom-resum}, factor out the $k=1$ term and sum the geometric series.
    Equation~\eqref{eq:geom-resum-power} follows by independence of the $k_i$.
    Part~(2) follows by summing the geometric series $\sum_{k\ge 1}u^{k-1}s^k = s/(1-us)$ and $\sum_{k\ge 1}u^{k+4}s^{k+6} = u^5s^7/(1-us)$.
    \end{proof}

    The multiplicative singular fibers $\I_k$ for $k \ge 1$ impose no vanishing conditions on $(a_4,a_6)$; their positions and contact orders are determined entirely by the discriminant $\Delta=4a_4^3+27a_6^2$. We isolate the local motivic class of the discriminant condition at a single point.

    \begin{defn}\label{def:Lambda}
    For each $\ell\ge 1$, the \emph{local multiplicative locus} is
    \[
    \Lambda(\ell)
    \;\coloneqq\;
    \bigl\{
    (\alpha_0,\ldots,\alpha_\ell,\;\beta_0,\ldots,\beta_\ell)
    \in\Ab^{\ell+1}\times\Ab^{\ell+1}
    \;\big|\;
    \mathrm{ord}_0(4A^3+27B^2)=\ell,\;
    (\alpha_0,\beta_0)\neq(0,0)
    \bigr\},
    \]
    where $A=\sum_{j=0}^{\ell}\alpha_j z^j$ and $B=\sum_{j=0}^{\ell}\beta_j z^j$. The class $\{\Lambda(\ell)\}\in K_0(\mathrm{Var}_k)$ depends only on~$\ell$, not on~$n$ or the choice of base point in~$\Pb^1$.
    \end{defn}

    The nonlinearity of $\Delta=4a_4^3+27a_6^2$ means that prescribing $\mathrm{ord}_{v_i}(\Delta)=k_i$ at $r$ distinct points requires controlling the $k_i$-jets of $(a_4,a_6)$ at each~$v_i$. The resulting conditions are independent whenever $\sum_i(k_i+1)\le 4n+1 = \dim H^0(\cO(4n))$.

    \begin{lem}\label{lem:heart}
    Let $k$ be a perfect field with $\mathrm{char}(k)\neq 2,3$.
    Fix $r$ distinct closed points $v_1,\ldots,v_r\in\Pb^1$
    and positive integers $k_1,\ldots,k_r\ge 1$.
    For $n\ge 1$, define the \emph{multiplicative multi-point locus} $\cM_n(k_\bullet,v_\bullet) \;\coloneqq\;$
    \[
    \bigl\{
    (a_4,a_6)\in H^0\!\bigl(\cO(4n)\bigr)\oplus H^0\!\bigl(\cO(6n)\bigr)
    \;\big|\;
    \mathrm{ord}_{v_i}(4a_4^3+27a_6^2)=k_i,\;
    (a_4(v_i),a_6(v_i))\neq(0,0)
    \;\;\forall\,i
    \bigr\}
    \]
    and the \emph{jet evaluation map}
    \[
    \mathrm{ev}\colon
    H^0\!\bigl(\cO(4n)\bigr)\oplus H^0\!\bigl(\cO(6n)\bigr)
    \;\longrightarrow\;
    \bigoplus_{i=1}^r
    \Bigl(J^{k_i}_{v_i}\!\bigl(\cO(4n)\bigr)
    \;\oplus\;
    J^{k_i}_{v_i}\!\bigl(\cO(6n)\bigr)\Bigr).
    \]
    If\/ $\sum_{i=1}^r(k_i+1)\le 4n+1$, then
    $\mathrm{ev}$ is surjective and
    \begin{equation}\label{eq:heart-factorization}
    \bigl\{\cM_n(k_\bullet,v_\bullet)\bigr\}
    \;=\;
    \Lb^{\,10n+2-\sum_i 2(k_i+1)}
    \;\cdot\;
    \prod_{i=1}^r\bigl\{\Lambda(k_i)\bigr\}.
    \end{equation}
    \end{lem}

    \begin{proof}
    The condition $\mathrm{ord}_{v_i}(\Delta)=k_i$ depends only on the $k_i$-jet of $(a_4,a_6)$ at~$v_i$, since $\Delta^{(j)}(v_i)$ for $j\le k_i$ is determined by $\{a_4^{(\ell)}(v_i),a_6^{(\ell)}(v_i)\}_{\ell\le j}$ by the Leibniz rule. The jet evaluation map $\mathrm{ev}$ decomposes as the direct sum of Hermite interpolation maps for~$a_4$ and~$a_6$ separately. The former is surjective when $\sum_i(k_i+1)\le 4n+1=\dim H^0(\cO(4n))$, the latter when $\sum_i(k_i+1)\le 6n+1=\dim H^0(\cO(6n))$, so $\mathrm{ev}$ is surjective under the stated bound. Since $\mathrm{ev}$ is a surjective linear map with kernel $\Ab^{10n+2-\sum_i 2(k_i+1)}$, the vector bundle relation gives $\{\mathrm{ev}^{-1}(Z)\}=\Lb^{\dim\ker}\cdot\{Z\}$ for any constructible $Z$ in the target. The conditions at distinct points $v_i$ act on disjoint jet factors, so $Z=\prod_i\Lambda(k_i)$ and~\eqref{eq:heart-factorization} follows.
    \end{proof}

    In the multiplicative sector, $\sum_i k_i=12n-D_{\mathrm{add}}$ where $D_{\mathrm{add}}$ is the additive discriminant degree, so the surjectivity bound fails whenever the multiplicative fibers carry more than roughly $\frac{1}{3}$ of $12n$. For the generic semistable elliptic surface all singular fibers are multiplicative and the bound fails at every height.

    \medskip

    For a fixed trivial lattice rank~$m$, the \emph{visible} fibers ($\I_k$ with $k\ge 2$, $\I_k^*$ with $k\ge 1$, and the additive types with $m_v-1\ge 1$) impose finitely many jet conditions whose total number is bounded independently of~$n$. The jet evaluation map at the visible support points is therefore surjective for $n$ sufficiently large $($depending on~$m$$)$. The residual $\I_1$ fibers (invisible, with $m_v-1=0$) do not enter the jet evaluation map; their contribution is handled by the motivic discriminant stabilization.

    \subsection{Euler product decomposition}

    We now decompose the multivariate height series into an Euler product over local factor types. Let $J$ denote the finite set of local factor types from Definition~\ref{def:local-patterns}. For each $j\in J$, write $A_j\in K_0(\Stck_k)[\Lb^{-1}]$ for the normalized one-fiber motivic class, $c_j\ge 0$ for the discriminant degree increment, and $\beta_{j,\alpha}\in\bZ_{\ge 0}$ for the number of markings of inertia type $\alpha\in\Ac$ in a factor of type~$j$. For non-cusp types, $c_j\ge 1$ is the fixed discriminant valuation of the fiber; for the cusp types $\I_\bullet$ and $\I_\bullet^*$, the discriminant degree depends on the contact order~$k$ and is encoded in the cusp variables $x_{\I_\bullet}$, $x_{\I_\bullet^*}$ via Lemma~\ref{lem:cusp-resum} rather than in a fixed exponent, so we set $c_j=0$. Define
    \[
    Y_j(s;\mathbf x)\coloneqq A_j\Bigl(\prod_{\alpha\in\Ac}x_\alpha^{\beta_{j,\alpha}}\Bigr)s^{c_j}.
    \]

    \begin{lem}\label{lem:eval-factor-Ztriv}
    After inverting $\Lb$, the multivariate height series $\cH(s;\mathbf x)$ of~\eqref{eq:H-def-Ztriv} satisfies
    \begin{equation}\label{eq:H-rA}
    \cH(s;\mathbf x)
    =
    \prod_{j\in J}
    \left(1 - Y_j(s;\mathbf x)\right)^{-\{\Pb^1\}}
    +\;P(s;\mathbf x),
    \end{equation}
    where
    \[
    (1-Y_j)^{-\{\Pb^1\}}
    =
    \sum_{N\ge0}\{\Sym^N(\Pb^1)\}\,Y_j^N
    =
    \frac{1}{(1-Y_j)(1-\Lb\,Y_j)}
    \]
    is the Kapranov zeta function evaluated at~$Y_j$, and $P(s;\mathbf x)\coloneqq\cH(s;\mathbf x)-\prod_{j\in J}(1-Y_j)^{-\{\Pb^1\}}$ is the remainder. For each monomial in the additive variables $\{x_\alpha\}_{\alpha\in\Ac_{\mathrm{nc}}}$, the coefficient of the Euler product is rational in~$s$. The remainder $P$ absorbs the small-height corrections, the unresolved multiplicative sector, and the cusp contact orders; its $s$-structure is resolved by the motivic discriminant stabilization in Theorem~\ref{thm:rationality-Ztriv}.
    \end{lem}

    \begin{proof}
    The weighted-linear-series / twisted-maps correspondence of \cite[Thm.~3.3 \& Prop.~5.8]{BPS} gives a finite locally closed stratification of the height moduli stack $\cW_n^{\min}$ into charts $\cH^\Gamma_{d,\Pb^1}(\Pc(4,6),\Oc(1))/S_\Gamma$ indexed by admissible local conditions $\Gamma$ and stable height~$d$ with $n = d + \sum a_i/r_i$.

    \medskip

    For each additive type $j\in J_{\mathrm{add}}$, the one-fiber stratum $\cW^{\gamma_j}_{n,\Pb^1}$ is a Zariski-locally trivial fibration over $\Pb^1$ by \cite[Prop.~6.7]{BPS}, with motivic class $(\Lb^2-1)\Lb^{10n-p_j-q_j}$ for $n\gg 0$. Normalizing by $\{\Pb^1\}\cdot\Lb^{10n}$ gives the $n$-independent coefficient $A_j = (\Lb-1)\Lb^{-p_j-q_j}$. For the multiplicative type $\I_\bullet$, the coefficient $A_{\I_k}=\Lb^{18}$ is computed in \cite[Cor.~2]{HP}; the formula $A_j=(\Lb-1)\Lb^{-p_j-q_j}$ does not apply since $(p_j,q_j)=(0,0)$ and the discriminant condition is nonlinear. For $\I_k^*$, the additive vanishing $(p,q)=(2,3)$ enters through $J_{\mathrm{add}}$ while the cusp contact order~$k$ is deferred to the cusp variable $x_{\I_\bullet^*}$.

    \medskip

    Since $\I_k$ fibers have $(p_j,q_j)=(0,0)$, they impose no vanishing conditions on $(a_4,a_6)$; their positions and contact orders are determined entirely by $\Delta=4a_4^3+27a_6^2$. Multi-point independence of discriminant root multiplicities requires $\sum_i(k_i+1)\le 4n+1$ by Lemma~\ref{lem:heart}, which fails in the multiplicative-heavy regime. At the level of this lemma, the multiplicative contribution is encoded formally by the cusp variables and the remainder~$P$ absorbs the unresolved $s$-structure.

    \medskip

    We decompose $\cH(s;\mathbf{x}) = \sum_T \cH_T(s;\mathbf{x}_{\mathrm{mult}}) \cdot \prod_{j\in J_{\mathrm{add}}} x_j^{N_j}$ by additive configuration $T=(N_j)_{j\in J_{\mathrm{add}}}$. Fix~$T$. The additive vanishing conditions at $|T|=\sum_j N_j$ distinct points of $\Pb^1$ impose $P(T)=\sum_j N_j p_j$ conditions on $H^0(\cO(4n))$ and $Q(T)=\sum_j N_j q_j$ on $H^0(\cO(6n))$. These are independent whenever $P(T)\le 4n+1$ and $Q(T)\le 6n+1$. Since $P(T)$ and $Q(T)$ depend only on~$T$ while $\dim H^0(\cO(4n))$ and $\dim H^0(\cO(6n))$ grow linearly in~$n$, there exists $n_1(T)$ such that independence holds for all $n\ge n_1(T)$.

    For $n\ge n_1(T)$, the motivic class of the locus in $U_n^{\circ}$ with additive configuration $T$ at $|T|$ unordered distinct points of~$\Pb^1$ factors as a product of independent local contributions $A_j$, yielding the Euler product for the additive component of $\cH_T$. The Kapranov identity $(1-Y_j)^{-\{\Pb^1\}} = 1/((1-Y_j)(1-\Lb Y_j))$ converts each factor into a rational function of~$s$. For each fixed monomial $\prod_{\alpha\in\Ac_{\mathrm{nc}}} x_\alpha^{N_\alpha}$, there is a unique additive configuration $T=(N_\alpha)_{\alpha\in\Ac_{\mathrm{nc}}}$, so only finitely many~$T$ contribute to any given monomial and the coefficient is rational in~$s$.

    Summing over $T$ and absorbing the small-height corrections $n < n_1(T)$ $($finitely many, hence polynomial in~$s$$)$, the unresolved multiplicative sector $($simple roots of $\Delta = 4a_4^3+27a_6^2$ at coprime points, handled by the motivic discriminant stabilization of Proposition~\ref{prop:discriminant-stabilization}$)$, and the cusp contact orders $($handled by the geometric resummation of Lemma~\ref{lem:cusp-resum} after specialization$)$ into $P(s;\mathbf{x})$ gives~\eqref{eq:H-rA}.
    \end{proof}

    Under the specialization $x_\alpha=u^{m(\alpha)-1}$, two types of invisible fibers acquire unbounded multiplicities at fixed $u^m$: type~$\II$ (with $m(\II)-1=0$, so $x_{\II}\mapsto 1$) and type~$\I_1$ (with $m(\I_1)-1=0$). For type~$\II$, the multiplicity is controlled on $\cW_n^{\circ}$ by $N_{\II}\le\deg(a_4)=4n$ (Proposition~\ref{prop:correction}(2)), while on the isotrivial loci separate exact Euler products apply (Proposition~\ref{prop:isotrivial-euler}). For type~$\I_1$, the residual simple roots of $\Delta=4a_4^3+27a_6^2$ are handled by the motivic discriminant stabilization (Proposition~\ref{prop:discriminant-stabilization}).

    \begin{prop}\label{prop:isotrivial-euler}
    Let $k$ be a perfect field with $\mathrm{char}(k)\neq 2,3$.
    Set $s=t^{1/12}$.

    \medskip\noindent
    \emph{(1) The $j\equiv 0$ locus.}
    On $\{a_4=0\}$, the Weierstrass model reduces to $y^2=x^3+a_6(t)$ with $a_6\in H^0(\Pb^1,\cO(6n))\setminus\{0\}$ subject to the minimality constraint $\nu_v(a_6)<6$ for all~$v$. The discriminant is $\Delta=27\,a_6^2$, so each singular fiber at~$v$ is determined by $\nu_v(a_6)\in\{1,2,3,4,5\}$:
    \[
    \begin{array}{c|ccccc}
    \nu_v(a_6) & 1 & 2 & 3 & 4 & 5 \\ \hline
    \text{Fiber type} & \II & \IV & \I_0^*\,(j{=}0) & \IV^* & \II^* \\
    m_v-1 & 0 & 2 & 4 & 6 & 8
    \end{array}
    \]
    Since all local conditions are \emph{linear} on the single coefficient space $H^0(\cO(6n))$, the local-to-global factorization is exact at all heights: no sparsity threshold or surjectivity condition is needed. Define $A^{(0)}_\nu\coloneqq(\Lb-1)\Lb^{-\nu}$ for $\nu\in\{1,\ldots,5\}$, and let $w(\nu)\coloneqq m_v-1$ be the trivial lattice contribution from the table above (explicitly $w(1,\ldots,5)=0,2,4,6,8$). Then
    \begin{equation}\label{eq:Z-j0}
    Z^{j=0}_{\Triv}(u;t)
    \;=\;
    u^2\cdot
    \prod_{\nu=1}^{5}
    \zeta_{\Pb^1}\!\bigl(A^{(0)}_\nu\,u^{w(\nu)}\,s^{2\nu}\bigr).
    \end{equation}

    \medskip\noindent
    \emph{(2) The $j\equiv 1728$ locus.}
    On $\{a_6=0\}$, the model reduces to $y^2=x^3+a_4(t)x$ with
    $a_4\in H^0(\Pb^1,\cO(4n))\setminus\{0\}$ and $\nu_v(a_4)<4$ for
    all~$v$.  The discriminant is $\Delta=4\,a_4^3$, and
    \[
    \begin{array}{c|ccc}
    \nu_v(a_4) & 1 & 2 & 3 \\ \hline
    \text{Fiber type} & \III & \I_0^*\,(j{=}1728) & \III^* \\
    m_v-1 & 1 & 4 & 7
    \end{array}
    \]
    All conditions are linear on $H^0(\cO(4n))$, and the
    factorization is again exact:
    \begin{equation}\label{eq:Z-j1728}
    Z^{j=1728}_{\Triv}(u;t)
    \;=\;
    u^2\cdot
    \prod_{\mu=1}^{3}
    \zeta_{\Pb^1}\!\bigl(A^{(1728)}_\mu\,u^{w'(\mu)}\,s^{3\mu}\bigr),
    \end{equation}
    with $A^{(1728)}_\mu=(\Lb-1)\Lb^{-\mu}$ and
    $w'(1)=1$, $w'(2)=4$, $w'(3)=7$.

    \medskip\noindent
    Both~\eqref{eq:Z-j0} and~\eqref{eq:Z-j1728} are finite products of evaluations of $\zeta_{\Pb^1}(y)=1/\bigl((1-y)(1-\Lb\,y)\bigr)$ at monomials, hence strictly rational in~$s$. Since the discriminant is a pure power of a single Weierstrass coefficient on both loci, every singular fiber is additive and neither the cusp resummation (Lemma~\ref{lem:cusp-resum}) nor the motivic transversality (Lemma~\ref{lem:heart}) is needed.
    \end{prop}

    \begin{proof}
    We prove~(1); part~(2) is identical with $H^0(\cO(6n))$ replaced by $H^0(\cO(4n))$.

    With $a_4=0$, the moduli at height~$n$ is $H^0(\Pb^1,\cO(6n))\setminus\{0\}$ modulo $\Gb_m$, and each local condition $\nu_v(a_6)\ge\nu$ is codimension-$\nu$ and linear on $H^0(\cO(6n))$. For distinct points $v_1,\ldots,v_r$, these conditions are independent whenever the Hermite interpolation map $H^0(\cO(6n))\to\bigoplus_i J^{\nu_i-1}_{v_i}(\cO(6n))$ is surjective, which requires $\sum_i\nu_i\le 6n+1=\dim H^0(\cO(6n))$. Since minimality forces $\nu_i\le 5$ and $\sum_i 2\nu_i=12n$ (discriminant degree), one has $\sum_i\nu_i=6n<6n+1$, so surjectivity holds at all heights and the Euler product is exact with no correction. The local factor coefficient $A^{(0)}_\nu=(\Lb-1)\Lb^{-\nu}$ records the codimension-$\nu$ vanishing with nonzero leading term.
    \end{proof}

    \begin{prop}\label{prop:correction}
    Decompose the moduli as a disjoint union
    \[
    \cW_n^{\min} \;=\; \cW_n^{\circ} \;\sqcup\; \cW_n^{j=0} \;\sqcup\; \cW_n^{j=1728},
    \]
    where $\cW_n^{j=0}$ parametrizes $j\equiv 0$ $($equivalently $a_4\equiv 0)$, $\cW_n^{j=1728}$ parametrizes $j\equiv 1728$ $($equivalently $a_6\equiv 0)$, and $\cW_n^{\circ}$ is the complement $($equivalently $a_4\not\equiv 0$ and $a_6\not\equiv 0$; this includes all non-constant $j$-invariant fibrations as well as constant $j\notin\{0,1728\})$.
    \begin{enumerate}[\normalfont(1)]
    \item On the isotrivial loci, the Euler products of Proposition~\ref{prop:isotrivial-euler} are exact at all heights, since all local conditions are linear on a single coefficient space.
    \item On $\cW_n^{\circ}$, every additive fiber at~$v$ requires $\nu_v(a_4)\ge 1$. Since $a_4\not\equiv 0$ has degree $\le 4n$, the total additive jet codimension on $H^0(\cO(4n))$ is at most $4n$, which is strictly less than $\dim H^0(\cO(4n))=4n+1$. The analogous bound holds for $a_6$. Hence the Hermite interpolation maps for the additive local conditions are surjective at all heights on $\cW_n^{\circ}$, and the sparsity argument of Lemma~\ref{lem:eval-factor-Ztriv} applies to the additive sector. The multiplicative sector $($root multiplicities of $\Delta=4a_4^3+27a_6^2$$)$ is not covered by this surjectivity and is handled separately by the motivic discriminant stabilization of Proposition~\ref{prop:discriminant-stabilization}.
    \item At $u=1$ the three loci assemble to the unweighted height zeta function $Z_{\lambdavec}(t)$, which equals the exact Euler product of~\cite[Thm.~8.9]{BPS}.
    \end{enumerate}
    \end{prop}
    \begin{proof}
    Part~(1) is Proposition~\ref{prop:isotrivial-euler}. For~(2), $\sum_v\nu_v(a_4)\le\deg(a_4)=4n<4n+1=\dim H^0(\cO(4n))$ and $\sum_v\nu_v(a_6)\le 6n<6n+1=\dim H^0(\cO(6n))$, so the Hermite interpolation maps for the additive conditions are surjective at all heights. Part~(3) follows from~\cite[Thm.~8.9]{BPS}.
    \end{proof}

    \subsection{Motivic discriminant stabilization}

    The motivic transversality (Lemma~\ref{lem:heart}) establishes multi-point independence of discriminant conditions when the jet evaluation map is surjective: $\sum_i(k_i+1)\le 4n+1$. For a purely multiplicative fibration at height~$n$, the total number of evaluation conditions is $\sum_i(k_i+1)=12n+r\ge 12n+1$, which exceeds the surjectivity bound $4n+1$ for every~$n\ge 1$. More generally, whenever the multiplicative fibers account for more than a third of the discriminant degree, the surjectivity condition fails. Since the generic elliptic surface over~$\Pb^1$ is semistable, having all singular fibers multiplicative, the motivic transversality alone does not cover the dominant part of moduli. To handle this regime, we adapt the motivic stabilization method of Vakil--Wood~\cite{VW} to the discriminant $\Delta=4a_4^3+27a_6^2$ of the Weierstrass model. The key idea is a truncated inclusion--exclusion in the dimensional completion of the Grothendieck ring, where the error from discriminant root multiplicities becomes dimensionally negligible.

    \medskip

    Recall~\cite[Section~1.4]{VW} that the \emph{dimensional filtration} on $\cM_\Lb\coloneqq K_0(\Stck_k)[\Lb^{-1}]$ is defined by
    \[
    F^d\cM_\Lb \;\coloneqq\; \bigl\langle [X]\Lb^{-j}:\dim X-j\le -d\bigr\rangle.
    \]
    Let $\widehat\cM_\Lb$ denote the completion of $\cM_\Lb$ with respect to this filtration. This is Kontsevich's ring of motivic integrals~\cite{Kontsevich}; it inherits a ring structure~\cite[\S1.4]{VW}. A sequence $a_N\to 0$ in $\widehat\cM_\Lb$ means $\dim(a_N)\to -\infty$.

    \medskip

    For the $u$-extraction, we call a singular fiber \emph{visible} if $m(F_v)-1\ge 1$, i.e.\ if it contributes positively to the trivial lattice beyond the minimal value~$2$. On $\cW_n^{\circ}$, the visible fiber types are: the additive types $\III$, $\IV$, $\I_0^*$, $\IV^*$, $\III^*$, $\II^*$ (each with $m_v-1\ge 1$); the multiplicative types $\I_k$ with $k\ge 2$ (with $m_v-1=k-1\ge 1$); and the starred cusp types $\I_k^*$ with $k\ge 1$ (with $m_v-1=k+4\ge 5$). The \emph{invisible} types are~$\II$ (with $m_v-1=0$) and $\I_1$ (with $m_v-1=0$).

    \medskip

    For a fixed power~$u^m$ in $Z_{\Triv}$, the visible fibers have total weight $\sum(m_v-1)=m-2$. Since each visible fiber contributes weight~$\ge 1$, the number of visible fibers is at most~$m-2$. Their types and (unordered) positions on~$\Pb^1$ form a \emph{visible configuration}~$\sigma$, with weight $|\sigma|_{\mathrm{wt}}\coloneqq\sum_{v\in\sigma}(m_v-1)=m-2$; there are finitely many such~$\sigma$ for each~$m$.

    \medskip

    Fix a visible configuration~$\sigma$ with $|\sigma|_{\mathrm{wt}}=m-2$. Each visible fiber imposes jet conditions at its support point: an additive visible fiber of type~$\Theta$ requires $\nu_v(a_4)\ge p_\Theta$ and $\nu_v(a_6)\ge q_\Theta$, while a multiplicative visible fiber $\I_{k_i}$ (with $k_i\ge 2$) or the cusp component of $\I_{k_j}^*$ requires prescribing $(k_i+1)$ or $(k_j+1)$ jets of $(a_4,a_6)$ respectively. Write
    \[
    D_{a_4}(\sigma) \;\coloneqq\; \sum_{v\in\sigma_{\mathrm{add}}} p_{\Theta_v} \;+\; \sum_{v\in\sigma_{\mathrm{mult}}} (k_v+1), \qquad D_{a_6}(\sigma) \;\coloneqq\; \sum_{v\in\sigma_{\mathrm{add}}} q_{\Theta_v} \;+\; \sum_{v\in\sigma_{\mathrm{mult}}} (k_v+1)
    \]
    for the total jet conditions imposed on $H^0(\cO(4n))$ and $H^0(\cO(6n))$ respectively, and $D(\sigma)\coloneqq D_{a_4}(\sigma)+D_{a_6}(\sigma)$. Then
    \begin{equation}\label{eq:visible-jet-bound}
    D(\sigma) \;\le\; D(m) \;\coloneqq\; \max_{\sigma'\in\Sigma(m)} D(\sigma') \;<\;\infty,
    \end{equation}
    since $\Sigma(m)$ is finite: each visible fiber contributes weight~$\ge 1$ to the total~$m-2$, bounding the number of visible fibers, while the weight constraint $\sum(k_i-1)\le m-2$ bounds the total multiplicative jet demand, and the additive jet demands are bounded by the fiber types in~$\sigma$.

    \medskip

    By Lemma~\ref{lem:heart} (for the multiplicative visible fibers) and the sparsity argument (Lemma~\ref{lem:eval-factor-Ztriv}, for the additive visible fibers), the jet evaluation map at the $\sigma$-support points is surjective for $n\ge n_0(m)\coloneqq\max_{\sigma\in\Sigma(m)}\bigl(\bigl\lceil(D_{a_4}(\sigma)-1)/4\bigr\rceil,\;\bigl\lceil(D_{a_6}(\sigma)-1)/6\bigr\rceil\bigr)$. Let
    \[
    U_n^{\circ} \;\coloneqq\; \bigl\{(a_4,a_6)\in H^0\!\bigl(\cO(4n)\bigr)\oplus H^0\!\bigl(\cO(6n)\bigr) :\, a_4\not\equiv 0,\; a_6\not\equiv 0\bigr\}
    \]
    denote the locus where $j\not\equiv 0$ and $j\not\equiv 1728$, and define
    \[
    G_\sigma(n) \;\coloneqq\; \bigl\{(a_4,a_6)\in U_n^{\circ}: \text{visible conditions of }\sigma\text{ hold at unordered support points}\bigr\}.
    \]
    For $n\ge n_0(m)$, the motivic class of $G_\sigma(n)$ satisfies
    \begin{equation}\label{eq:G-sigma-class}
    \bigl\{G_\sigma(n)\bigr\} \;=\; C(\sigma)\cdot\Lb^{10n+2-D(\sigma)} \;+\; E_\sigma(n),
    \end{equation}
    where $C(\sigma)\in\cM_\Lb$ depends on~$\sigma$ alone and $\dim(E_\sigma(n))\le 6n+1-D_{a_6}(\sigma)$ accounts for the removal of the isotrivial loci from the kernel.

    \medskip

    The locus $G_\sigma(n)$ includes Weierstrass data whose exact visible configuration is a strict enlargement~$\sigma'\supsetneq\sigma$: the data may have extra visible fibers at points outside the $\sigma$-support. Such data have $T(S)>m$ and contribute to~$u^{m'}$ with $m'>m$, not to~$[u^m]Z_{\Triv}$. To extract the exact visible locus
    \[
    F_\sigma(n) \;\coloneqq\; \bigl\{(a_4,a_6)\in G_\sigma(n): \text{no visible fibers outside the }\sigma\text{-support}\bigr\},
    \]
    we must remove the extra-visible-fiber contributions. On the coprime locus outside the $\sigma$-support, the condition ``no visible multiplicative fiber at~$v$'' is $\mathrm{ord}_v(\Delta)\le 1$ (i.e., $\Delta$ has at most a simple root at~$v$). Equivalently, the residual discriminant must have no multiple roots on $\Pb^1\setminus V_\sigma$, where $V_\sigma$ is the $\sigma$-support. This is a motivic discriminant-squarefreeness condition on the image of the nonlinear map $\Delta=4a_4^3+27a_6^2$, which we handle by the Vakil--Wood method. (Extra visible additive fibers cannot occur on the coprime locus outside~$V_\sigma$: every additive type has $\nu_v(a_4)\ge 1$ and $\nu_v(a_6)\ge 1$, forcing $(a_4(v),a_6(v))=(0,0)$.)

    \medskip

    We first compute the codimension of the extra multiple root condition at a single coprime point.

    \begin{lem}\label{lem:multiple-root-codim}
    Let $v\in\Pb^1\setminus V_\sigma$. On the coprime locus $\{(a_4(v),a_6(v))\neq(0,0)\}$, the condition $\mathrm{ord}_v(\Delta)\ge 2$ defines a constructible subset of the $1$-jet space $J^1_v(\cO(4n))\oplus J^1_v(\cO(6n))\cong\Ab^4$ of class $\Lb^2-\Lb$ and codimension~$2$.
    \end{lem}

    \begin{proof}
    Write $\alpha=a_4(v)$, $\alpha'=a_4'(v)$, $\beta=a_6(v)$, $\beta'=a_6'(v)$ for the $1$-jet coordinates. The condition $\mathrm{ord}_v(\Delta)\ge 2$ on the coprime locus $(\alpha,\beta)\neq(0,0)$ consists of two parts. First, $\Delta(v)=4\alpha^3+27\beta^2=0$ restricts $(\alpha,\beta)$ to the cuspidal cubic $\{4x^3+27y^2=0\}\setminus\{(0,0)\}$, which is parametrized by $(\alpha,\beta)=(-3\lambda^2,2\lambda^3)$ for $\lambda\in\Gb_m$; its class is $\Lb-1$. Second, $\Delta'(v)=12\alpha^2\alpha'+54\beta\beta'=0$: since $4\alpha^3+27\beta^2=0$ with $(\alpha,\beta)\neq(0,0)$ forces $\beta\neq 0$, this is a nontrivial linear condition on $(\alpha',\beta')$, cutting codimension~$1$ in $\Ab^2$; its solution space has class~$\Lb$. The total class in $\Ab^4$ is $(\Lb-1)\cdot\Lb=\Lb^2-\Lb$, which has dimension~$2$ inside the ambient~$\Ab^4$. The codimension is~$2$.
    \end{proof}

    Following~\cite[Section~3]{VW}, we perform a truncated inclusion--exclusion to pass from $G_\sigma(n)$ to $F_\sigma(n)$. For an unordered set $\tau=\{w_1,\ldots,w_\ell\}$ of $\ell$ distinct points in $\Pb^1\setminus V_\sigma$, let $G_{\sigma,\tau}(n)\subset G_\sigma(n)$ denote the sublocus where $\mathrm{ord}_{w_i}(\Delta)\ge 2$ for each $w_i\in\tau$. For $k\ge 0$, let $G_{\sigma,\ge k}(n)$ denote the sublocus of $G_\sigma(n)$ where the residual discriminant has at least~$k$ additional multiple roots (each of multiplicity~$\ge 2$) on $\Pb^1\setminus V_\sigma$. The standard inclusion--exclusion on the labeled bad-point conditions gives, at truncation depth~$N$,
    \[
    F_\sigma(n) \;=\; \sum_{\ell=0}^{N}(-1)^\ell \sum_{|\tau|=\ell} \bigl\{G_{\sigma,\tau}(n)\bigr\} \;+\; (-1)^{N+1}\bigl\{G_{\sigma,\ge N+1}(n)\bigr\}.
    \]

    For each~$\ell\le N$, the locus $G_{\sigma,\tau}(n)$ parametrizes coprime data satisfying the $\sigma$-conditions and having $\mathrm{ord}_{w_i}(\Delta)\ge 2$ at the $\ell$ points $w_1,\ldots,w_\ell$ of~$\tau$. The~$1$-jet evaluation map at the~$\sigma$-support and the~$\tau$-points,
    \[
    \mathrm{ev}\colon H^0(\cO(4n))\oplus H^0(\cO(6n)) \;\longrightarrow\; \bigoplus_{v\in V_\sigma} \bigl(J^{k_v}_v\oplus J^{k_v}_v\bigr) \;\oplus\; \bigoplus_{i=1}^\ell \bigl(J^1_{w_i}\oplus J^1_{w_i}\bigr),
    \]
    has source dimension $10n+2$ and target dimension $D(\sigma)+4\ell$. This is surjective whenever
    \begin{equation}\label{eq:VW-surjectivity}
    D_{a_4}(\sigma)+2\ell \;\le\; 4n+1 \qquad\text{and}\qquad D_{a_6}(\sigma)+2\ell \;\le\; 6n+1.
    \end{equation}
    When surjective, Lemma~\ref{lem:multiple-root-codim} gives the factorization
    \begin{equation}\label{eq:VW-factorization}
    \sum_{|\tau|=\ell}\bigl\{G_{\sigma,\tau}(n)\bigr\} \;=\; C(\sigma)\cdot \bigl\{\Sym^\ell(\Pb^1\setminus V_\sigma)\bigr\} \cdot(\Lb^2-\Lb)^\ell \cdot\Lb^{10n+2-D(\sigma)-4\ell},
    \end{equation}
    where the first factor is the local class at the $\sigma$-points (same as in~\eqref{eq:G-sigma-class}), the second chooses the~$\ell$ extra positions, the third is the local multiple-root class from Lemma~\ref{lem:multiple-root-codim} at each extra point, and the fourth is the kernel dimension. Here and below, all factorizations on $U_n^{\circ}$ hold up to the isotrivial correction of~\eqref{eq:G-sigma-class}, which is absorbed by the dimensional completion.

    \begin{prop}\label{prop:discriminant-stabilization}
    For each visible configuration~$\sigma$ with $|\sigma|_{\mathrm{wt}}=m-2$, and each truncation depth~$N\ge 0$, the truncated inclusion--exclusion
    \begin{equation}\label{eq:VW-truncation}
    F_\sigma^{(N)}(n) \;\coloneqq\; \sum_{\ell=0}^{N}(-1)^\ell \sum_{|\tau|=\ell} \bigl\{G_{\sigma,\tau}(n)\bigr\}
    \end{equation}
    satisfies the following for all $n\ge n_0(\sigma,N) \coloneqq \max\!\bigl( \bigl\lceil(D_{a_4}(\sigma)+2N+1)/4\bigr\rceil,\; \bigl\lceil(D_{a_6}(\sigma)+2N+1)/6\bigr\rceil \bigr)$, where $D_{a_4}(\sigma)+D_{a_6}(\sigma)=D(\sigma)\le D(m)$:
    \begin{enumerate}[\normalfont(1)]
    \item Each summand in~\eqref{eq:VW-truncation} is given by~\eqref{eq:VW-factorization}.
    \item The truncation error satisfies the dimension bound
    \begin{equation}\label{eq:VW-error-bound}
    \dim\bigl(F_\sigma(n)-F_\sigma^{(N)}(n)\bigr) \;\le\; 10n+2-D(\sigma)-(N+1).
    \end{equation}
    \item After normalizing by $\Lb^{10n+2-D(\sigma)}$, the error lies in $F^{N+1}\widehat\cM_\Lb$. In particular,
    \[
    \frac{\{F_\sigma(n)\}}{\Lb^{10n+2-D(\sigma)}} \;\longrightarrow\; C(\sigma)\cdot \zeta_{\Pb^1\setminus V_\sigma}\!\bigl(-\Lb^{-3}(\Lb-1)\bigr)
    \]
    in $\widehat\cM_\Lb$ as $N\to\infty$, for each~$n\ge n_0(\sigma,N)$, where $\zeta_X(y)=\sum_{k\ge 0}\{\Sym^k(X)\}\,y^k$ is the Kapranov motivic zeta function.
    \end{enumerate}
    \end{prop}

    \begin{proof}
    Part~(1) follows from the surjectivity condition~\eqref{eq:VW-surjectivity}: for $\ell\le N$ and $n\ge n_0(\sigma,N)$, the jet evaluation is surjective and~\eqref{eq:VW-factorization} applies.

    \medskip

    For part~(2), the error is $(-1)^{N+1}\{G_{\sigma,\ge N+1}(n)\}$, where $G_{\sigma,\ge N+1}(n)$ parametrizes coprime data satisfying the $\sigma$-conditions and having at least~$N+1$ additional multiple roots of~$\Delta$ on $\Pb^1\setminus V_\sigma$. Consider the incidence variety
    \[
    \cI_{N+1} \;\coloneqq\; \bigl\{ \bigl((a_4,a_6),\,(w_1,\ldots,w_{N+1})\bigr) \in G_\sigma(n)\times \mathrm{Conf}_{N+1}(\Pb^1\setminus V_\sigma) :\, \mathrm{ord}_{w_i}(\Delta)\ge 2 \;\;\forall\,i \bigr\}.
    \]
    Since $G_{\sigma,\ge N+1}(n)$ is the image of $\cI_{N+1}$ under the first projection, $\dim G_{\sigma,\ge N+1}\le\dim\cI_{N+1}$. We estimate $\dim\cI_{N+1}$ via the second projection $\pi_2\colon\cI_{N+1}\to \mathrm{Conf}_{N+1}(\Pb^1\setminus V_\sigma)$.

    \medskip

    Fix a generic configuration $w=(w_1,\ldots,w_{N+1})$ with $w_i$ pairwise distinct and $w_i\notin V_\sigma$. The fiber $\pi_2^{-1}(w)$ consists of $(a_4,a_6)\in G_\sigma(n)$ satisfying $\Delta(w_i)=0$ and $\Delta'(w_i)=0$ at each~$w_i$. At~$w_i$, the condition $\Delta(w_i)=4a_4(w_i)^3+27a_6(w_i)^2=0$ is a single polynomial equation in the linear forms $a_4(w_i)$, $a_6(w_i)$ on the source. Since $4X^3+27Y^2$ is irreducible and nonconstant on $\{(X,Y)\neq(0,0)\}$, this condition cuts codimension~$1$ on the coprime locus. For generic~$w_i$, the evaluation functionals $a_4(w_i)$, $a_6(w_i)$ are independent of the evaluation functionals at~$w_j$ ($j\neq i$) and of the $\sigma$-jet constraints, since distinct-point evaluations on $H^0(\cO(d))$ are linearly independent whenever the number of evaluation points does not exceed $d+1$, which holds for $n\ge n_0(\sigma,N)$. Hence $\Delta(w_i)=0$ imposes a fresh codimension-$1$ condition at each~$w_i$. Given $\Delta(w_i)=0$, the derivative condition $\Delta'(w_i)=12a_4(w_i)^2\,a_4'(w_i)+54a_6(w_i)\,a_6'(w_i)=0$ is a nontrivial linear equation in $a_4'(w_i)$, $a_6'(w_i)$ (by Lemma~\ref{lem:multiple-root-codim}, since $(a_4(w_i),a_6(w_i))\neq(0,0)$ on the coprime locus); for generic~$w_i$ this derivative evaluation is independent of the previous constraints by the same interpolation argument (the derivative evaluations $a_4'(w_i)$, $a_6'(w_i)$ are independent of those at $w_j$ for $j\neq i$ provided the total number of evaluation points does not exceed $\deg(a_4)+1=4n+1$, which again holds for $n\ge n_0(\sigma,N)$), cutting a further codimension~$1$. Therefore the $2(N+1)$ conditions at $w_1,\ldots,w_{N+1}$ are generically independent, giving
    \[
    \dim\pi_2^{-1}(w) \;\le\; (10n+2-D(\sigma))-2(N+1).
    \]
    By the fiber dimension theorem~\cite[Ex.~II.3.22]{Hartshorne}, $\dim\cI_{N+1} \le \dim\pi_2^{-1}(w) + \dim\mathrm{Conf}_{N+1}(\Pb^1\setminus V_\sigma)$, so
    \[
    \dim\cI_{N+1} \;\le\; (10n+2-D(\sigma)-2(N+1))+(N+1) \;=\; 10n+2-D(\sigma)-(N+1),
    \]
    and hence $\dim G_{\sigma,\ge N+1} \le 10n+2-D(\sigma)-(N+1)$, yielding~\eqref{eq:VW-error-bound}. The bound holds vacuously if $G_{\sigma,\ge N+1}(n)$ is empty.

    \medskip
    
    Part~(3) follows: after normalizing by $\Lb^{10n+2-D(\sigma)}$, the error has dimension $\le -(N+1)$, hence lies in $F^{N+1}\widehat\cM_\Lb$ and converges to~$0$ as $N\to\infty$. The limit is the full inclusion--exclusion sum, which converges in $\widehat\cM_\Lb$ to the stated expression. This adapts~\cite[Section~3]{VW} to the nonlinear discriminant $\Delta=4a_4^3+27a_6^2$: the role of the growing line bundle in~\cite{VW} is played by the growing height~$n$, and the limit $\zeta_{\Pb^1\setminus V_\sigma}(-\alpha)$ with $\alpha=\Lb^{-3}(\Lb-1)$ is the motivic analog of the squarefree probability of~\cite[Theorem~1.13]{VW}.
    \end{proof}

    \begin{thm}\label{thm:rationality-Ztriv}
    Let $k$ be a perfect field with $\mathrm{char}(k)\neq 2,3$. Set $K=k(t)$ and $s=t^{1/12}$. Then for each $m\ge 2$,
    \[
    [u^m]Z_{\Triv}(u;t) \;\in\; \overline{\widehat\cM_\Lb(s)} \;\subset\; \widehat\cM_\Lb\llbracket s\rrbracket,
    \]
    where $\widehat\cM_\Lb$ is the completion of $K_0(\Stck_k)[\Lb^{-1}]$ with respect to the dimensional filtration and $\overline{\widehat\cM_\Lb(s)}$ denotes the closure of the rational functions $\widehat\cM_\Lb(s)$ inside $\widehat\cM_\Lb\llbracket s\rrbracket$ with respect to the coefficient-wise dimensional filtration topology. More precisely, for each $m\ge 2$ and each truncation depth $N\ge 0$, there exists a rational function $R_{m,N}(s)\in\widehat\cM_\Lb(s)$ such that the coefficient of $s^{12n}$ in $[u^m]Z_{\Triv}(u;t)-R_{m,N}(s)$ has dimension $\le 10n-(N+1)$ for all $n\ge n_0(m,N)$. Setting $u = 1$ recovers the exact Euler product of~\cite[Thm.~8.9]{BPS}: $Z_{\Triv}(1;t)\in K_0(\Stck_k)[\Lb^{-1}](s)$.
    \end{thm}

    \begin{proof}
    Work in the dimensional completion $\widehat\cM_\Lb$ of $K_0(\Stck_k)[\Lb^{-1}]$.

    For $n=0$ the discriminant degree is $0$, so the corresponding elliptic curve over $K=k(t)$ has everywhere good reduction and is constant. The moduli stack $\cW_0^{\min}$ identifies with $\cM_{1,1}$, so $\{\cW_0^{\min}\}=\Lb$ by~\cite{Ekedahl} and the height-zero term is $u^2\cdot\Lb$.

    \medskip

    As in Proposition~\ref{prop:correction}, decompose $Z_{\Triv} = Z^{\circ}_{\Triv} + Z^{j=0}_{\Triv} + Z^{j=1728}_{\Triv}$. By Proposition~\ref{prop:isotrivial-euler}, the isotrivial Euler products $Z^{j=0}_{\Triv}$ and $Z^{j=1728}_{\Triv}$ are finite products of Kapranov factors, hence rational in~$s$ already in $K_0(\Stck_k)[\Lb^{-1}](s)$.

    \medskip

    Fix $m\ge 2$ and extract $[u^m]Z^{\circ}_{\Triv}$. On $\cW_n^{\circ}$ both $a_4\not\equiv 0$ and $a_6\not\equiv 0$, and each Weierstrass datum $(a_4,a_6)\in U_n^{\circ}$ determines a fiber configuration with $T(S)=2+\sum_v(m_v-1)$. The visible fibers (those with $m_v-1\ge 1$) have total weight $\sum(m_v-1)=m-2$, and the invisible fibers ($\II$ and $\I_1$, each with $m_v-1=0$) contribute nothing to~$u^m$. Let $\Sigma(m)$ be the finite set of visible configurations~$\sigma$ with $|\sigma|_{\mathrm{wt}}=m-2$. Then
    \begin{equation}\label{eq:Fdecomp}
    [u^m]Z^{\circ}_{\Triv} \;=\; \sum_{\sigma\in\Sigma(m)} \sum_{n\ge 1} \frac{\{F_\sigma(n)\}}{\{\Gb_m\}\cdot\{\mathrm{PGL}_2\}} \,s^{12n},
    \end{equation}
    where $F_\sigma(n)$ is the locus of data in $U_n^{\circ}$ with visible configuration exactly~$\sigma$ at unordered support points on~$\Pb^1$, $\Gb_m$ acts on $(a_4,a_6)$ with weights $(4,6)$, and $\Aut(\Pb^1)=\PGL_2$ acts by reparametrization of the base.

    \medskip

    For each~$\sigma\in\Sigma(m)$, the additive visible fibers impose linear vanishing conditions at the support points with bounded total codimension $D(\sigma)\le D(m)$ (cf.~\eqref{eq:visible-jet-bound}). On the non-constant-$j$ locus, type-$\II$ fibers satisfy $\nu_v(a_4)\ge 1$; since $a_4\not\equiv 0$ has degree~$\le 4n$, the total type-$\II$ multiplicity is at most~$4n$. Since $\II$ is invisible, these do not affect the additive visible codimension. By the sparsity argument of Lemma~\ref{lem:eval-factor-Ztriv}, the additive conditions are independent for $n\ge n_1(\sigma)$. The multiplicative visible fibers ($\I_{k_i}$ with $k_i\ge 2$ and the cusp components of $\I_{k_j}^*$) have total jet demand bounded by $D(m)$, so by Lemma~\ref{lem:heart} the jet evaluation map at the visible support points is surjective for $n\ge n_0(\sigma)\coloneqq\max\!\bigl(\bigl\lceil(D_{a_4}(\sigma)-1)/4\bigr\rceil,\,\bigl\lceil(D_{a_6}(\sigma)-1)/6\bigr\rceil\bigr)$.

    \medskip

    For $n\ge\max(n_0(\sigma),n_1(\sigma))$, the class $\{G_\sigma(n)\}$ satisfies~\eqref{eq:G-sigma-class}: $\{G_\sigma(n)\}=C(\sigma)\cdot\Lb^{10n+2-D(\sigma)}+E_\sigma(n)$, where $G_\sigma(n)\supseteq F_\sigma(n)$ includes data with possible extra visible fibers, $C(\sigma)$ depends on~$\sigma$ alone, and $\dim(E_\sigma(n))\le 6n+1-D_{a_6}(\sigma)$. To pass from $G_\sigma(n)$ to $F_\sigma(n)$, we remove the extra visible fibers at points outside the $\sigma$-support. By Proposition~\ref{prop:discriminant-stabilization}, for each truncation depth~$N$ and $n\ge n_0(\sigma,N)$, the truncated approximation $F_\sigma^{(N)}(n)$ satisfies~\eqref{eq:VW-factorization} and the error $F_\sigma(n)-F_\sigma^{(N)}(n)$ has dimension $\le 10n+2-D(\sigma)-(N+1)$. The generating function $\sum_{n\ge n_0}F_\sigma^{(N)}(n)\,s^{12n}$ is rational in~$s$: each summand in~\eqref{eq:VW-truncation} is a product of a motivic constant, a power of~$\Lb^n$, and a symmetric power class, hence contributes a rational function of~$s$. The error series has coefficients of dimension $\le 10n+2-D(\sigma)-(N+1)$, so for each~$N$ the generating function $\sum_n\{F_\sigma(n)\}s^{12n}$ is within dimensional distance~$N+1$ of a rational function of~$s$. Since $N$ is arbitrary, $\sum_n\{F_\sigma(n)\}s^{12n}\in\overline{\widehat\cM_\Lb(s)}$.

    \medskip

    For $n<\max(n_0(\sigma,N),n_1(\sigma))$ (finitely many heights depending on~$m$), the contributions $\sum_{n<n_0}\{F_\sigma(n)\}s^{12n}$ form a polynomial in~$s$, which is rational. Summing over the finite set~$\Sigma(m)$ and dividing by $\{\Gb_m\}\cdot\{\mathrm{PGL}_2\}$, we conclude $[u^m]Z^{\circ}_{\Triv}\in\overline{\widehat\cM_\Lb(s)}$. Combined with the isotrivial Euler products and the height-zero term, $[u^m]Z_{\Triv}\in\overline{\widehat\cM_\Lb(s)}$ for each~$m$.

    \medskip

    At $u=1$ the trivial lattice grading is forgotten and $Z_{\Triv}(1;t)=Z_{\vec\lambda}(t)$, the unweighted motivic height zeta function. By~\cite[Thm.~8.9]{BPS}, this equals an exact Euler product: the stabilization of Proposition~\ref{prop:discriminant-stabilization} is exact at $u=1$ because every visible configuration contributes with weight~$1$, the overcounting from~$G_\sigma$ cancels in the sum over all~$\sigma$, and the correction vanishes.
    \end{proof}

    As a consequence of Theorem~\ref{thm:rationality-Ztriv} and the weight bound in mixed Hodge theory, over $k=\Cb$ the approximate motivic rationality yields bidegree-wise Hodge number stabilization.

    \begin{cor}\label{cor:HD-stabilization}
    Let $k=\Cb$, $K=\Cb(t)$, and $s=t^{1/12}$. For each $m\ge 2$ and each bidegree $(p_0,q_0)\in\bZ^2$, the sequence $n\mapsto e^{\,p_0,\,q_0}_c\bigl(\bigl\{\cW_n^{\min}(m)\bigr\}\cdot\Lb^{-10n}\bigr)$ is eventually constant, and the generating function $\sum_{n\ge 0} e^{\,p_0,\,q_0}_c\bigl(\bigl\{\cW_n^{\min}(m)\bigr\}\cdot\Lb^{-10n}\bigr)\,s^{12n}$ is a rational function of~$s$.
    \end{cor}

    \begin{proof}
    The weight bound in mixed Hodge theory states: for $\alpha\in K_0(\Stck_\Cb)[\Lb^{-1}]$ with $\dim(\alpha)\le d$, one has $e^{p,q}(\alpha)=0$ for $p+q>2d$. This holds by reduction to the variety case via the scissor relations~\cite[Thm.~1.2]{Ekedahl} and Deligne's weight bound~\cite{Deligne_Hodge3}.

    \medskip

    Fix $m\ge 2$ and a visible configuration $\sigma\in\Sigma(m)$. By Proposition~\ref{prop:discriminant-stabilization}, for each truncation depth $N\ge 0$ and $n\ge n_0(\sigma,N)$, the normalized truncated approximation $F_\sigma^{(N)}(n)\cdot\Lb^{-(10n+2-D(\sigma))}$ equals a motivic class $K_{\sigma,N}\in\cM_\Lb$ independent of~$n$: the $n$-dependence in~\eqref{eq:VW-factorization} enters only through the kernel factor $\Lb^{10n+2-D(\sigma)-4\ell}$, which is absorbed by the normalization. The normalized truncation error $(F_\sigma(n)-F_\sigma^{(N)}(n))\cdot\Lb^{-(10n+2-D(\sigma))}$ has dimension $\le -(N+1)$ by Proposition~\ref{prop:discriminant-stabilization}(2), and the isotrivial correction $E_\sigma(n)$ of~\eqref{eq:G-sigma-class} has normalized dimension $\le -(4n+1-D_{a_4}(\sigma))$. By the weight bound, both contributions vanish at any fixed bidegree $(p_0,q_0)$ once $N\ge\lceil-(p_0+q_0)/2\rceil$ and $n$ is sufficiently large.

    \medskip

    Therefore, for each $(p_0,q_0)$, the Hodge number $e^{p_0,q_0}(F_\sigma(n)\cdot\Lb^{-(10n+2-D(\sigma))})$ equals $[x^{p_0}y^{q_0}]\,K_{\sigma,N}$ for all $n\ge n_0(m,N)$, and this value stabilizes for $N\ge\lceil-(p_0+q_0)/2\rceil$. The full coefficient $\{\cW_n^{\min}(m)\}\cdot\Lb^{-10n}$ is assembled from the finite sum over $\sigma\in\Sigma(m)$, the isotrivial Euler products (which are strictly rational in~$s$ by Proposition~\ref{prop:isotrivial-euler}, hence contribute eventually constant Hodge numbers), and finitely many small-height corrections (a polynomial in~$s$). Each Hodge number $e^{p_0,q_0}_c(\{\cW_n^{\min}(m)\}\cdot\Lb^{-10n})$ is therefore eventually constant in~$n$, and an eventually constant sequence satisfies a linear recurrence, giving rationality in~$s$.
    \end{proof}

    The explicit Euler product $E(u;s)$ built from Table~\ref{TableOfMotive} agrees with $Z_{\Triv}(u;t)$ at $u=1$ by~\cite[Thm.~8.9]{BPS} and to arbitrary dimensional precision for general~$u$ by Theorem~\ref{thm:rationality-Ztriv}. We conjecture exact agreement:

    \begin{conj}\label{conj:euler-product}
    For each reduction type $\Theta$ in Table~\ref{TableOfMotive},
    let $y_\Theta(u;s)$ denote the argument of
    $\zeta_{\Pb^1}$ in the displayed Euler factor, and set
    \begin{equation}\label{eq:Ztriv-euler-product}
    E(u;s)
    \;\coloneqq\;
    u^2\cdot \Lb \cdot
    \prod_{\Theta}
    \zeta_{\Pb^1}\!\bigl(y_\Theta(u;s)\bigr).
    \end{equation}
    Then $[u^m]Z_{\Triv}(u;t)=[u^m]E(u;s)$
    in~$\widehat\cM_\Lb(s)$ for each~$m\ge 2$.
    \end{conj}

    Since $E(u;s)$ is rational in both~$u$ and~$s$, the conjecture would give strict bivariate rationality of $Z_{\Triv}(u;t)$; it is possible, however, that the coefficient-wise approximate rationality of Theorem~\ref{thm:rationality-Ztriv} is optimal, since the stabilization threshold $n_0(m,N)$ grows with~$m$ and no uniform bound is available.

    \begin{rmk}\label{rmk:one-fiber-origin}
    Assume $\mathrm{char}(k)\neq 2,3$. The local factor coefficients $A_\Theta$ in Table~\ref{TableOfMotive} arise from two distinct mechanisms (cf. \cite[Thm.~1.6]{BPS})

    \medskip\noindent
    \emph{Additive types.}
    For each additive Kodaira type $\Theta\in\{\II,\III,\IV,\I_0^*(j\neq 0,1728),\I_0^*(j\in\{0,1728\}),\IV^*,\III^*,\II^*\}$, the coefficient $A_\Theta$ is the normalized one-fiber motivic class of the stratum $\cW_{n,\Pb^1}^{\Theta}$ parametrizing minimal fibrations with exactly one singular fiber of type~$\Theta$ and semistable fibers elsewhere. After the quotient by $\Aut(\Pb^1)=\PGL_2$\footnote{The \emph{unparameterized} $\Pb^1_k$ corresponds to taking the $\PGL_2$ stack quotient; motivically this factors out $\{\PGL_2\}=\Lb(\Lb^2-1)$. See \cite{JJ} for a comprehensive treatment.} and the height-dependent factor $\Lb^{10n-20}$:
    \[
    A_\Theta
    \;\coloneqq\;
    \frac{\{\cW_{n,\Pb^1}^{\Theta}\}}
         {\{\PGL_2\}\,\Lb^{10n-20}}.
    \]
    This normalization differs from that in the proof of Lemma~\ref{lem:eval-factor-Ztriv} ($A_j=\{\cW_{n,\Pb^1}^{\Theta}\}/(\{\Pb^1\}\cdot\Lb^{10n})$) by a universal factor independent of~$\Theta$, absorbed into the prefactor.

    \medskip\noindent
    \emph{Cusp families.}
    For the multiplicative family $\I_k$ ($k\ge 1$), no additive vanishing is imposed on $(a_4,a_6)$: the contact order $k=\mathrm{ord}_v(\Delta)$ is determined entirely by the nonlinear discriminant $\Delta=4a_4^3+27a_6^2$. The coefficient $A_{\I_k}=\Lb^{18}$ for all $k\ge 1$ is the normalized motivic class of the local multiplicative locus $\Lambda(k)$ of Definition~\ref{def:Lambda} (originally~\cite[Cor.~2]{HP}). For the additive cusp family $\I_k^*$ ($k\ge 1$), the local condition imposes additive vanishing $(p,q)=(2,3)$ (the same as $\I_0^*(j\neq 0,1728)$; cf.~\cite[Thm.~1.6]{BPS}) together with a cusp contact order~$k$ determined by the discriminant; the normalized class is $A_{\I_k^*}=\Lb^{14}-\Lb^{13}$ for all $k\ge 1$. In both cases, the $k$-independence is what allows the cusp resummation (Lemma~\ref{lem:cusp-resum}) to collapse the $k$-dependence into $(1-us)^{-1}$.

    \medskip

    Then \cite[Thm.~1.8]{BPS} determines the following one-fiber motivic classes.

    \medskip

    \noindent\emph{Conventions.}\;
    In Table~\ref{TableOfMotive}, $\delta(s)\coloneqq 1-us$ denotes the cusp resummation denominator of Lemma~\ref{lem:cusp-resum}; this is not to be confused with the Weierstrass discriminant $\Delta=4a_4^3+27a_6^2$. For each reduction type $\Theta$, let $y_\Theta(u;s)$ denote the local monomial in the displayed denominator; we record only the reduced factor $(1-y_\Theta)^{-1}$, the full $\Pb^1$-contribution being
    \[
    (1-y_\Theta)^{-\{\Pb^1\}}
    =
    \frac{1}{\bigl(1-y_\Theta\bigr)
    \bigl(1-\Lb\,y_\Theta\bigr)}.
    \]

    \begin{table}[ht!]
    \centering
    \renewcommand{\arraystretch}{3}
    \setlength{\tabcolsep}{2.5pt}
    \begin{tabular}{|c|c|c|c|c|}
    \hline
    Reduction type $\Theta$ & $\gamma:(\nu(a_4),\,\nu(a_6))$ & $(r,a)$ & $m_v-1$
    & $\Pb^1$-Euler factor in $Z_{\Triv}(u;t)$ \\ \hline
    $\I_{k\ge 1}$ $(j=\infty)$ & $(0,0)$ & $\emptyset$ & $k-1$ &
    $\displaystyle \frac{1}{1-\Lb^{18}\,s\,\delta(s)^{-1}}$
    \\ \hline
    $\II$ $(j=0)$ & $(\ge 1,\,1)$ & $(6,1)$ & $0$ &
    $\displaystyle \frac{1}{1-\Lb^{17}\,s^{2}}$
    \\ \hline
    $\III$ $(j=1728)$ & $(1,\,\ge 2)$ & $(4,1)$ & $1$ &
    $\displaystyle \frac{1}{1-\Lb^{16}\,u\,s^{3}}$
    \\ \hline
    $\IV$ $(j=0)$ & $(\ge 2,\,2)$ & $(3,1)$ & $2$ &
    $\displaystyle \frac{1}{1-\Lb^{15}\,u^{2}\,s^{4}}$
    \\ \hline
    $\I_{k\ge 1}^*$ $(j=\infty)$ & $(2,\,3)$ & $(2,1)$ & $k+4$ &
    $\displaystyle \frac{1}{1-(\Lb^{14}-\Lb^{13})\,u^{5}\,s^{7}\,\delta(s)^{-1}}$
    \\ \hline
    $\I_0^*$ $(j\neq 0,1728)$ & $(2,\,3)$ & $(2,1)$ & $4$ &
    $\displaystyle \frac{1}{1-(\Lb^{14}-\Lb^{13})\,u^{4}\,s^{6}}$
    \\ \hline
    $\I_0^*$ $(j=0)$ & $(\ge 3,\,3)$ & $(2,1)$ & $4$ &
    $\displaystyle \frac{1}{1-\Lb^{13}\,u^{4}\,s^{6}}$
    \\ \hline
    $\I_0^*$ $(j=1728)$ & $(2,\,\ge 4)$ & $(2,1)$ & $4$ &
    $\displaystyle \frac{1}{1-\Lb^{13}\,u^{4}\,s^{6}}$
    \\ \hline
    $\IV^*$ $(j=0)$ & $(\ge 3,\,4)$ & $(3,2)$ & $6$ &
    $\displaystyle \frac{1}{1-\Lb^{12}\,u^{6}\,s^{8}}$
    \\ \hline
    $\III^*$ $(j=1728)$ & $(3,\,\ge 5)$ & $(4,3)$ & $7$ &
    $\displaystyle \frac{1}{1-\Lb^{11}\,u^{7}\,s^{9}}$
    \\ \hline
    $\II^*$ $(j=0)$ & $(\ge 4,\,5)$ & $(6,5)$ & $8$ &
    $\displaystyle \frac{1}{1-\Lb^{10}\,u^{8}\,s^{10}}$
    \\ \hline
    \end{tabular}
    \caption{Local factors for $\Pc(4,6)$.}
    \label{TableOfMotive}
    \end{table}

    \end{rmk}


    \section{Kudla--Millson modularity and the canonical height distribution}
    \label{sec:Modularity}

    The approximate rationality of $Z_{\Triv}(u;t)$ rests on the locality of the trivial lattice rank: $T(S)$ depends only on the fiber configuration $\mathfrak{f}$, hence is constant on each Kodaira stratum, and unordered collections of local factors assemble into a finite Euler product. In contrast, the Mordell--Weil rank $\rk(E/K)$ is not determined by the fiber configuration. On a fixed Kodaira stratum $\cW_n^{\min,(\mathfrak{f})}$ with $T(S)=T(\mathfrak{f})$ constant, the Shioda--Tate formula shows that variation of $\rk(E/K)$ is equivalent to variation of $\rho(S)$. Inside such a stratum, imposing that $\NS(S_{\bar k})$ contain additional algebraic classes independent of the trivial lattice defines Noether--Lefschetz loci, which by the theorem of Cattani--Deligne--Kaplan~\cite{CDK} form a countable union of closed algebraic subsets that is not constructible. The local-to-global factorization producing a finite Euler product for $Z_{\Triv}$ structurally fails for $Z_{\MW}$ and $Z_{\NS}$.

    \medskip

    The moduli stack $\cW_n^{\min,(\mathfrak f)}$ in Proposition~\ref{prop:finite-kodaira-strat} provides the geometric input that converts this obstruction into an arithmetic statement: modularity of special cycles on period domains yields transcendence of the distribution of new algebraic classes by canonical height~$d$ at every height $n \ge 2$. Over $k = \Cb$, on a fixed Kodaira stratum, any Noether--Lefschetz class appearing newly must have nonzero Mordell--Weil projection by the Shioda--Tate formula, forcing $\rk(E/K)\ge 1$.

    \subsection{Period domains and special cycles at height \texorpdfstring{$n$}{n}}

    At height $n \ge 2$, the stable stratum $\cW_n^{\mathrm{st}}(T{=}2)$ parametrizes elliptic surfaces with section and $12n$ nodal fibers of type~$\I_1$. More generally, the \emph{irreducible-fibers locus} $\cW_n^{\mathrm{irr}}(T{=}2) \supseteq \cW_n^{\mathrm{st}}$ parametrizes surfaces whose singular fibers are all irreducible $($types $\I_1$ and $\II$$)$; on this locus $R(\mathfrak{f})=0$ and the Heegner to Mordell--Weil implication holds for all $d\ge 1$. For any surface $[S] \in \cW_n^{\min}$, the orthogonal complement of the sublattice $\langle F, O \rangle$ spanned by the fiber class and zero section
    \[
    \Lambda_n \;\coloneqq\; \langle F, O \rangle^\perp \;\subset\; H^2(S,\bZ)
    \]
    has rank $12n-4$ and signature $(2n-2,\,10n-2)$. The lattice $\Lambda_n$ is unimodular $($since $\langle F,O\rangle$ is unimodular and $H^2(S,\bZ)$ is unimodular by Poincar\'e duality$)$ and even $($by the Wu formula: $\gamma^2 \equiv \gamma \cdot K_S = (n-2)\,\gamma \cdot F = 0 \pmod{2}$ for all $\gamma \in \langle F,O\rangle^\perp)$. By the classification of indefinite even unimodular lattices~\cite[Ch.~V, Thm.~5]{Serre_Arithmetic},
    \[
    \Lambda_n \;\cong\; U^{\oplus(2n-2)}\oplus E_8(-1)^{\oplus n},
    \]
    where $U$ denotes the rank-$2$ hyperbolic lattice. The classifying space for Hodge structures on $\Lambda_n$ is 
    \[
    \cD_n \;\simeq\; \SO(2n{-}2,\,10n{-}2)\big/U(n{-}1)\times \SO(10n{-}2),
    \]
    which is a Hermitian symmetric domain (type~IV) when $n = 2$ and a Griffiths domain when $n \ge 3$. For each integer $d \ge 1$, the Heegner locus
    \[
    \cH_d \;\coloneqq\; \bigl\{ x \in\cD_n : \exists\,\gamma\in\Lambda_n,\; 
    \gamma^2=-2d,\; \gamma\perp H^{2,0}_x \bigr\}
    \]
    is a locally finite union of codimension-$(n-1)$ complex submanifolds of $\cD_n$. By Borel's finiteness theorem~\cite{Borel69}, $O(\Lambda_n,\bZ)$ acts with finitely many orbits on norm $-2d$ lattice vectors, so $\cH_d$ descends to a finite union of analytic subvarieties of codimension $n-1$ on the arithmetic quotient $O(\Lambda_n,\bZ)\backslash\cD_n$, defining a cohomology class $[\cH_d] \in H^{2(n-1)}(O(\Lambda_n,\bZ)\backslash\cD_n,\,\Qb)$. Via the period map, the pullback $\Phi^{-1}(\cH_d) \subset \cW_n^{\mathrm{st}}$ is a closed algebraic subvariety by~\cite{CDK}.

    \subsection{Modularity: Borcherds for \texorpdfstring{$n=2$}{n2}, Kudla--Millson for all \texorpdfstring{$n$}{n}}

    At $n = 2$, $\Lambda_2$ has signature $(2,18)$ and $\cD_2$ is a type~IV Hermitian symmetric domain. The Borcherds theorem~\cite{Borcherds_GKZ} gives modularity in the Chow group:

    \begin{thm}[Borcherds {\cite[Thm.~4.5]{Borcherds_GKZ}}]\label{thm:borcherds}
    Let $L$ be an even unimodular lattice of signature $(2,b^-)$ with $b^-\ge 3$, and let $\Gamma\subset O(L)$ be a subgroup of finite index. The formal generating series $\sum_{d\ge 0}[\cH_d]\,q^d$ of Heegner divisor classes is a modular form of weight $1+b^-/2$ for $\SL_2(\bZ)$, valued in $\mathrm{CH}^1(\Gamma\backslash\cD)$.
    \end{thm}

    At $n = 2$ with $b^- = 18$, this gives a modular form of weight $10$ in the Chow group; the resulting Noether--Lefschetz numbers for K3 surfaces were computed explicitly by Maulik--Pandharipande~\cite{MP_NL}. However, Borcherds requires signature $(2, b^-)$ and does not extend to $n \ge 3$. 

    \medskip

    Although the period domain $\cD_n$ is a Griffiths domain for $n \ge 3$, and not Hermitian symmetric, Greer~\cite[Thm.~37]{Greer} established modularity by pulling back the Kudla--Millson theorem from the underlying symmetric space~$M_n$ to~$\cD_n$. We will use the following formulation due to Garcia, which applies directly to the Griffiths domain $\cD_n$ without requiring it to be Hermitian symmetric.

    \begin{thm}[Garcia {\cite[Thm.~1.2]{Garcia_spd}}]\label{thm:garcia}
    Let $(V_\Qb, Q)$ be a rational quadratic space, $L \subset V_\Qb$ an even integral lattice with dual lattice $L^\vee \supseteq L$, and $\cD$ the period domain parametrizing polarized Hodge structures of even weight on~$V_\Qb$. Let $\Gamma_L \coloneqq \{\gamma \in O^+(V_\Rb, Q) \mid \gamma(L) = L,\; \gamma|_{L^\vee/L} = \mathrm{id}\}$. Then the generating series of Hodge loci
    \[
    c_{\mathrm{top}}(\cF^\vee) \;+\; \sum_{d \ge 1} \mathrm{Hdg}(d, L)\,q^d
    \]
    is a holomorphic modular form of weight $\tfrac{1}{2}\dim V$, valued in $H^{2\,\rk(\cF)}(\Gamma_L\backslash\cD,\,\Qb)$. When $L$ is unimodular, $L^\vee = L$ and the modular form is scalar-valued for $\SL_2(\bZ)$; for general $L$ it transforms under the Weil representation of the discriminant group $L^\vee/L$.
    \end{thm}

    Applied to $\Lambda_n$ with $\dim\Lambda_n = 12n-4$ and Hodge bundle $\cF = H^{2,0}$ of rank $n-1$:

    \begin{cor}\label{cor:NL-modular-all-n}
    For each $n \ge 2$, the generating series of Noether--Lefschetz classes on the arithmetic quotient $O(\Lambda_n,\bZ)\backslash\cD_n$ is a cohomology-valued modular form of weight
    \[
    \tfrac{1}{2}(12n-4) \;=\; 6n-2
    \]
    for $\SL_2(\bZ)$, valued in $H^{2(n-1)}(O(\Lambda_n,\bZ)\backslash\cD_n,\,\Qb)$. The scalar-valuedness and $\SL_2(\bZ)$-modularity hold because $\Lambda_n$ is unimodular. At $n = 2$, this recovers the Kudla--Millson theorem~\cite{KM_theta} on the type~IV domain, and Borcherds~\cite{Borcherds_GKZ} refines the statement to $\mathrm{CH}^1$.
    \end{cor}

    The period map is $\Phi\colon\cW_n^{\mathrm{st}}\to O(\Lambda_n,\bZ)\backslash\cD_n$. We work on this arithmetic quotient throughout. For the transcendence conclusion below, we need the generating series to be a nonzero modular form, which requires the Heegner classes $[\cH_d]$ to be nonzero in cohomology.

    \begin{lem}\label{lem:heegner-nonvanishing}
    For each $n\ge 2$ and all $d\gg 0$, the Heegner class $[\cH_d]$ is nonzero in $H^{2(n-1)}(O(\Lambda_n,\bZ)\backslash\cD_n,\,\Qb)$.
    \end{lem}

    \begin{proof}
    Let $\Gamma'\subset O(\Lambda_n,\bZ)$ be a torsion-free finite-index normal subgroup, which exists by Selberg's lemma. The period domain fibres over the symmetric space,
    \[
    g\colon\Gamma'\backslash\cD_n\;\longrightarrow\;
    \Gamma'\backslash M_n, \qquad 
    M_n\coloneqq\SO(2n{-}2,\,10n{-}2)\big/
    \SO(2n{-}2)\times\SO(10n{-}2),
    \]
    with fiber $Y = \SO(2n{-}2)/U(n{-}1)$, and the Heegner loci are $g$-preimages of the Kudla--Millson special cycles: $\cH_d=g^{-1}(\tilde{C}_{2d})$~\cite[\S5]{Greer}. For every $d\ge 1$, the lattice $\Lambda_n$ contains a vector of norm $-2d$ $($since $\Lambda_n$ contains a copy of the hyperbolic plane $U=\langle e,f\rangle$ with $e^2=f^2=0$, $e\cdot f=1$, and $v=e-df$ has $v^2=-2d$$)$, so $\tilde{C}_{2d}$ is nonempty.

    To see that it represents a nonzero cohomology class, we use Kudla--Millson modularity~\cite[Thm.~2]{KM_theta}: the generating series $\sum_{d\ge 0}[\tilde{C}_{2d}]q^d$ is a modular form of weight $6n-2$ for $\SL_2(\bZ)$ valued in $H^{2n-2}(\Gamma'\backslash M_n,\,\Qb)$. Its constant term $[\tilde{C}_0] = e(\mathcal{V}^\vee)$, the Euler class of the dual tautological bundle, is nonzero~\cite[Thm.~37]{Greer}. Because the space of Eisenstein series of weight $6n-2$ for $\SL_2(\bZ)$ is one-dimensional, the Eisenstein component of this modular form is exactly $E_{6n-2}(q) \otimes c\,e(\mathcal{V}^\vee)$ for some nonzero constant $c$. Thus, its $d$-th Fourier coefficient contributes a main term that is a nonzero multiple of $\sigma_{6n-3}(d)\cdot e(\mathcal{V}^\vee)$. By Deligne's bound, the cuspidal Fourier coefficients are $O(d^{(6n-3)/2+\epsilon})$. Since $n \ge 2$, this cuspidal error is asymptotically dominated by the Eisenstein main term, which grows at least as fast as $d^{6n-3}$. Hence $[\tilde{C}_{2d}]\neq 0$ in $H^{2n-2}(\Gamma'\backslash M_n,\,\Qb)$ for all $d\gg 0$.

    The cohomology of the fiber $Y = \SO(2n-2)/U(n-1)$ is generated by Chern classes of the tautological complex bundle~\cite{Borel53}, which extend to the total space as Chern classes of the Hodge bundle~$\cF$. By Leray--Hirsch, $g^*\colon H^p(\Gamma'\backslash M_n,\Qb)\hookrightarrow H^p(\Gamma'\backslash\cD_n,\Qb)$ is injective, so $[\cH_d]=g^*[\tilde{C}_{2d}]\neq 0$. Finally, the projection $\Gamma'\backslash\cD_n\to O(\Lambda_n,\bZ)\backslash\cD_n$ is a finite cover, so pullback is injective on rational cohomology, and the class $[\cH_d]$ remains nonzero on $O(\Lambda_n,\bZ)\backslash\cD_n$.
    \end{proof}

    The modularity of the Noether--Lefschetz generating series for elliptic surfaces over~$\Pb^1$, in all Siegel genera, was established by Greer~\cite[Thm.~37]{Greer} by pulling back the Kudla--Millson theorem~\cite{KM_theta} on the symmetric space~$M_n$ to the period domain~$\cD_n$ via the natural fibration; Garcia~\cite[Thm.~1.2]{Garcia_spd} gives an alternative proof via Quillen superconnections, working intrinsically on~$\cD_n$. In both treatments, the generating series is a scalar-valued modular form for $\SL_2(\bZ)$ because the full primitive lattice $\Lambda_n=\langle F,O\rangle^\perp$ is even unimodular of signature $(2n{-}2,\,10n{-}2)$ on all of $\cW_n^{\min}$.

    \medskip

    To isolate the Mordell--Weil contribution from root-lattice contributions $($i.e., algebraic classes from reducible singular fibers$)$ requires a finer stratification: on each Kodaira stratum $\cW_n^{\min,(\mathfrak{f})}$ the root lattice $R(\mathfrak{f})\subset \Lambda_n$ is constant and algebraic throughout the stratum, so a newly algebraic class $\gamma\in\Lambda_n$ on the stratum cannot lie in $R(\mathfrak{f})$ and therefore projects non-trivially to $\MW(E/K)$ under Shioda--Tate. This per-Kodaira-stratum argument relies on the constructible stratification of $\cW_n^{\min}$ by Kodaira type established in~\cite[Thms.~5.1 and~7.12]{BPS}.

    \medskip

    Throughout this section, $\sigma(P)\in\Lambda_n\otimes\Qb$ denotes the image of $P\in E(K)$ under the Shioda map~\cite[\S8]{Shioda}, and $\hat{h}(P) \coloneqq -\tfrac{1}{2}\sigma(P)^2$ the normalized canonical height, so that $\gamma^2 = -2d$ corresponds to $\hat{h}(P) = d$.

    \begin{thm}\label{thm:MW-modularity}
    For each $n \ge 2$, let $\Lambda_n = \langle F, O \rangle^\perp \subset H^2(S,\bZ)$ be the orthogonal complement of the sublattice spanned by the fiber class and zero section, an even unimodular lattice of signature $(2n{-}2,\,10n{-}2)$ and rank $12n-4$. Let $\cD_n$ be the associated period domain.
    \begin{enumerate}[\normalfont(1)]
    \item The generating series of Noether--Lefschetz classes
    \[
    c_{\mathrm{top}}(\cF^\vee) \;+\; \sum_{d \ge 1} [\cH_d]\,q^d \;\in\; H^{2(n-1)}(O(\Lambda_n,\bZ)\backslash\cD_n,\,\Qb)\llbracket q\rrbracket
    \]
    is a holomorphic modular form of weight $6n-2$ for $\SL_2(\bZ)$, valued in 

    $H^{2(n-1)}(O(\Lambda_n,\bZ)\backslash\cD_n,\,\Qb)$. The Fourier coefficient $[\cH_d]$ is the cycle class of the Heegner locus parametrizing Hodge structures where a lattice vector $\gamma\in\Lambda_n$ with $\gamma^2=-2d$ becomes algebraic.
    \item On any fixed Kodaira stratum $\cW_n^{\min,(\mathfrak{f})}$, the root lattice $R(\mathfrak{f})\subset\Lambda_n$ is constant and algebraic throughout. A class $\gamma\in\Lambda_n$ with $\gamma^2=-2d$ that becomes newly algebraic on the stratum cannot lie in $R(\mathfrak{f})$, hence forces $\rk(E/K)\ge 1$ by the Shioda--Tate exact sequence. The corresponding section has canonical height $\hat{h}(P)\le d$, with equality when all singular fibers are irreducible $($types $\I_1$ and $\II$$)$. The genus-$r$ Siegel theta series restricted to $R(\mathfrak{f})^\perp\cap\Lambda_n$ detects $\rk(E/K)\ge r$ for each $1\le r\le 10n-T(\mathfrak{f})$.
    \item For any $\alpha\in H_{2(n-1)}(O(\Lambda_n,\bZ)\backslash\cD_n,\,\Qb)$ such that $\alpha\cap[\cH_d]\neq 0$ for some~$d$, the scalar-valued series
    \[
    \varphi_{n,\alpha}(q)\;=\;\alpha \cap c_{\mathrm{top}}(\cF^\vee)\;+\;\sum_{d \ge 1}(\alpha \cap [\cH_d])\,q^d\;\in\;M_{6n-2}(\SL_2(\bZ))
    \]
    is transcendental over~$\Cb(q)$. Such $\alpha$ exist by Lemma~\ref{lem:heegner-nonvanishing}.
    \end{enumerate}
    \end{thm}

    \begin{proof}
    For~(1), this is Corollary~\ref{cor:NL-modular-all-n} applied to the unimodular lattice $\Lambda_n$.

    For~(2), the key input is the constructible stratification of Proposition~\ref{prop:finite-kodaira-strat}: at each height~$n$, the moduli stack $\cW_n^{\min}$ decomposes into finitely many Kodaira strata $\cW_n^{\min,(\mathfrak{f})}$, each a locally closed substack on which the fiber configuration $\mathfrak{f}$ is constant. The root lattice $R(\mathfrak{f})\subset\Lambda_n$, spanned by the irreducible fiber components not meeting the zero section, is therefore constant on the stratum, and its classes are of type~$(1,1)$ at every point.

    Suppose $\gamma\in\Lambda_n$ with $\gamma^2=-2d$ becomes of type~$(1,1)$ at $[S]\in\cW_n^{\min,(\mathfrak{f})}$. If $\gamma\in R(\mathfrak{f})$, then $\gamma$ is already of type~$(1,1)$ everywhere on the stratum, contradicting the assumption that $\gamma$ is newly algebraic. Hence $\gamma\notin R(\mathfrak{f})$. Since $\gamma\in\Lambda_n$ is also orthogonal to $\langle F,O\rangle$, it does not lie in $\Triv(S)=\langle F,O\rangle\oplus R(\mathfrak{f})$. The Shioda--Tate exact sequence $0\to\Triv(S)\to\NS(S_{\bar k})\to\MW(E/K)\to 0$ therefore shows that $\gamma$ maps nontrivially to $\MW(E/K)$, giving $\rk(E/K)\ge 1$ $($cf.~\cite[\S5, Thm.~1.3]{Shioda}$)$.

    To bound the canonical height, let $\gamma_R\in R(\mathfrak{f})\otimes\Qb$ and $\gamma_{\MW}\in (R(\mathfrak{f})^\perp\cap\Lambda_n)\otimes\Qb$ be the orthogonal projections of $\gamma$ in $\Lambda_n\otimes\Qb$~\cite[\S8, Lem.~8.1]{Shioda}. Since $R(\mathfrak{f})$ is negative definite, $\gamma_R^2\le 0$, and the image of $\gamma$ in $\MW(E/K)\otimes\Qb$ under the Shioda map has canonical height
    \[
    \hat{h}(P)\;=\;-\gamma_{\MW}^2/2\;=\;d+\gamma_R^2/2\;\le\; d,
    \]
    where equality holds if and only if $\gamma_R=0$, which is guaranteed, for instance, when all singular fibers are irreducible $($types $\I_1$ and $\II$$)$. 

    The higher-rank statement follows by applying the same argument to $r$ independent classes $\gamma_1,\ldots,\gamma_r$ simultaneously. The genus-$r$ Siegel series is Garcia~\cite[Thm.~1.2]{Garcia_spd} applied to the restricted lattice $R(\mathfrak{f})^\perp\cap\Lambda_n$ at genus~$r$. Nonemptiness of the Hodge loci $\mathrm{Hdg}(T,R(\mathfrak{f})^\perp\cap\Lambda_n)$ for arbitrarily large positive definite $T$ of rank $r\le 10n-T(\mathfrak{f})$ is immediate: in Garcia's convention $Q=-(\text{intersection form})$, the $Q$-positive definite part of $(R(\mathfrak{f})^\perp\cap\Lambda_n)\otimes\Rb$ has dimension $10n-T(\mathfrak{f})\ge r$, so $R(\mathfrak{f})^\perp\cap\Lambda_n$ contains rank-$r$ $Q$-positive definite sublattices, and scaling produces forms of arbitrarily large determinant.

    For~(3), by Lemma~\ref{lem:heegner-nonvanishing}, $[\cH_d]\neq 0$ in $H^{2(n-1)}(O(\Lambda_n,\bZ)\backslash\cD_n,\Qb)$ for $d\gg 0$. The universal coefficient theorem provides $\alpha$ with $\alpha\cap[\cH_d]\neq 0$. Pairing the cohomology-valued series of part~(1) with $\alpha$ gives a scalar-valued modular form $\varphi_{n,\alpha}\in M_{6n-2}(\SL_2(\bZ))$, which is nonzero since $\alpha\cap[\cH_d]\neq 0$. At $n=2$, $\dim M_{10}(\SL_2(\bZ))=1$, so $\varphi_{2,\alpha}\propto E_{10}$. A nonzero modular form of positive weight is transcendental over~$\Cb(q)$.
    \end{proof}

    The per-Kodaira-stratum hypothesis in Theorem~\ref{thm:MW-modularity}(2) is necessary, since a stratum-independent statement does not hold in general. On a Kodaira stratum $\cW_n^{\min,(\mathfrak f)}$ with root lattice $R(\mathfrak f)\subset\Lambda_n$, every point of the stratum satisfies the Heegner condition at index~$d$ whenever $R(\mathfrak f)$ represents~$d$, i.e., whenever there exists $\gamma\in R(\mathfrak f)$ with $\gamma^2=-2d$. Such a class is algebraic throughout the stratum and has zero Mordell--Weil projection. For example, on a stratum with a single $\I_2$ fiber and all remaining singular fibers of type $\I_1$, $R(\mathfrak{f})=A_1$ contains a root of norm $-2$, so the Heegner condition at $d=1$ is satisfied throughout the stratum with no Mordell--Weil rank jump. On any stratum with all singular fibers irreducible $($types $\I_1$ and $\II$$)$, $R(\mathfrak f)=0$ and the implication holds for all $d\ge 1$. 

    \medskip

    Although Noether--Lefschetz loci are defined via Hodge theory, their arithmetic content is visible over finite fields: the loci where $\rho(S)$ jumps are detected by Frobenius eigenvalues on $H^2_{\et}(S_{\bar{\Fb}_q},\Qb_\ell)$ acquiring additional algebraic coincidences $($cf.~\cite{Tayou} for the analogous phenomenon for K3 surfaces over $\Fb_p$$)$. However, the theta correspondence underlying Theorem~\ref{thm:MW-modularity} is a real-analytic construction with no known $\ell$-adic analogue, and for $n\ge 3$ the period domain is not a Shimura variety, so the existing modularity frameworks do not extend directly to~$\Fb_q(t)$.

    \medskip

    Theorem~\ref{thm:MW-modularity} gives a modularity ladder indexed by Siegel genus: the Fourier coefficients of the genus-$r$ theta lift on $\Lambda_n$ detect elliptic surfaces with $\rk(E/K)\ge r$ in the period domain. However, non-emptiness of Hodge loci $\mathrm{Hdg}(T,\Lambda_n)$ in $\cD_n$ does not by itself produce an elliptic surface with $\rk(E/K)\ge r$: for $n\ge 3$ the period image $\Phi(\cW_n^{\mathrm{st}})$ is a proper subvariety of $O(\Lambda_n,\bZ)\backslash\cD_n$ and need not meet all Hodge loci. We close this gap in Section~\ref{sec:KS}: Kodaira--Spencer transversality produces stable elliptic surfaces at Faltings height~$n$ with $\rk(E/K)\ge r$ for every
    \[
    1\;\le\; r \;\le\; \left\lfloor \frac{10n-2}{n-1}  \right\rfloor 
    \;=\; 
    \left\lfloor 
    \frac{\dim\cW_n^{\mathrm{st}}}{h^{2,0}} 
    \right\rfloor 
    \qquad (\text{Theorem~\ref{thm:KS-higher-rank}}),
    \]
    the ratio of the moduli dimension $\dim\cW_n^{\min}=\dim\cW_n^{\mathrm{st}}=10n-2$ to the cost $h^{2,0}=n-1$ of each Heegner condition.

    \begin{rmk}\label{rmk:cusp-form}
    At height $n=3$, $M_{16}(\SL_2(\bZ))=\Cb\cdot E_{16}\oplus\Cb\cdot\Delta E_4$ is two-dimensional, and the cohomology-valued modular form of Theorem~\ref{thm:MW-modularity}(1) decomposes as
    \[
    \theta_{\mathrm{KM}}(\tau) \;=\; E_{16}(\tau)\otimes\eta_{\mathrm{Eis}} 
    \;+\; \Delta(\tau)E_4(\tau)\otimes\eta_{\mathrm{cusp}}
    \]
    with $\eta_{\mathrm{Eis}},\,\eta_{\mathrm{cusp}}\in H^4(O(\Lambda_3,\bZ)\backslash\cD_3,\,\Qb)$. The Eisenstein component $\eta_{\mathrm{Eis}}$ is nonzero since it contains $c_{\mathrm{top}}(\cF^\vee)$ as the constant term. The cuspidal component $\eta_{\mathrm{cusp}}$ is nonzero if and only if $L(\Delta E_4,\,8)\neq 0$ by the regularized Rallis inner product formula~\cite{KR, GQT}. The functional equation has sign $+1$, and the LMFDB entry for the newform \texttt{1.16.a.a} gives $L(\Delta E_4,\,8)\approx 1.521 \neq 0$, so $\eta_{\mathrm{cusp}}\neq 0$. By contrast, at height $n=2$ one has $\dim S_{10}(\SL_2(\bZ)) = 0$, forcing $\varphi_{2,\alpha}\propto E_{10}$; the Noether--Lefschetz numbers are then proportional to $\sigma_9(d)$ as computed by Maulik--Pandharipande~\cite{MP_NL}.
    \end{rmk}

    \medskip

    \section{Kodaira--Spencer transversality and geometric realization of Heegner loci}
    \label{sec:KS}

    The modularity of the Noether--Lefschetz generating series establishes that the distribution of new algebraic classes by canonical height on $O(\Lambda_n,\bZ)\backslash\cD_n$ is governed by a modular form of weight $6n-2$. However, for $n\ge 3$ $($i.e., $p_g\ge 2$$)$ the period map $\Phi\colon \cW_n^{\mathrm{st}}\to O(\Lambda_n,\bZ)\backslash\cD_n$ is far from surjective: $\dim\cW_n^{\mathrm{st}}=10n-2$ whereas $\dim\cD_n=(n-1)(10n-2)+\tfrac{1}{2}(n-1)(n-2)$, the two terms corresponding to dimensions of $\Hom(H^{2,0},H^{1,1}_{\mathrm{prim}})$ and $\bigwedge^2(H^{2,0})^*$ respectively. Whether $\mathrm{Im}(\Phi)$ meets a given Heegner locus $\cH_d$ is not resolved by the theta correspondence alone.

    \medskip

    The expected codimension of the Noether--Lefschetz locus for regular elliptic surfaces with section was established by Cox~\cite{Cox_NL} using the Jacobian ring of the Weierstrass equation. We refine this to transversality to $\gamma^\perp$ for every primitive $\gamma\in\Lambda_n$ with $\gamma^2<0$, at every general point of the stable stratum $\cW_n^{\mathrm{st}}$, using the orthogonal basis of $H^{1,1}_{\mathrm{prim}}$ and the diagonalization of the Gauss--Manin connection established by Shepherd-Barron~\cite{SB_Torelli}.

    \medskip

    Let $[S]\in\cW_n^{\mathrm{st}}$ be a general stable elliptic surface with $12n$ nodal fibers, so that the classifying morphism $\varphi\colon\Pb^1\to \overline\cM_{1,1}$ is simply ramified at $N=10n-2$ points $P_1,\ldots,P_N$. While Section~1 considered the moduli over a fixed base curve, the period map is invariant under base automorphisms, so we henceforth pass to the moduli stack of stable elliptic surfaces over an unparameterized $\Pb^1_{\Cb}$ by quotienting by $\mathrm{Aut}(\Pb^1)\cong\PGL_2$, yielding $\dim\cW_n^{\mathrm{st}}=10n-2$ (cf.~\cite{JJ,Miranda}).

    \medskip

    By~\cite[Thms.~4.10 and~4.11]{SB_Torelli}, the ramification points give rise to meromorphic $2$-forms $\eta_i\in H^0(S,\Omega^2_S (2E_{P_i}))_{\mathrm{2nd\,kind}}$ whose cohomology classes form an orthogonal basis of $H^{1,1}_{\mathrm{prim}}(S)$. The branch locus of~$\varphi$ induces an \'etale local coordinate system $(t_1,\ldots,t_N)$ on $\cW_n^{\mathrm{st}}$, and for each~$i$ the derivative $\nabla_{\partial/\partial t_i}$ maps $H^{2,0}(S)$ into the line $\Cb\cdot\eta_i\subset H^{1,1}_{\mathrm{prim}}(S)$ with kernel the codimension-$1$ subspace $H^0(S,\Omega^2_S(-E_{P_i}))$.



    \begin{lem}\label{lem:coupling-nonvanishing}
    For a general stable elliptic surface $[S]\in\cW_n^{\mathrm{st}}$ and distinct $i,j$, let $\alpha_{ij}$ denote the $[\eta_j]$-coefficient of $\nabla_{\partial/\partial t_j}\eta_i$ in $H^{1,1}_{\mathrm{prim}}$. Then $\alpha_{ij}\neq 0$. Furthermore, for any disjoint subsets of branch-point indices $A,B \subset \{1,\ldots,N\}$ with $|A| \le n-1$ and $|B| \ge 8n+1$, the row vectors $(\alpha_{jm})_{m\in B}$ for $j\in A$ are linearly independent.
    \end{lem}

    \begin{proof}
    By~\cite[Thm.~4.11(1)]{SB_Torelli}, $\nabla_{\partial/\partial t_j}\eta_i \in H^0(\Omega^2_S(2E_{P_i}+2E_{P_j}))_{\mathrm{2nd\,kind}}$, and its class in $H^{1,1}_{\mathrm{prim}}$ is a linear combination of $[\eta_i]$ and $[\eta_j]$. The $[\eta_j]$-coefficient $\alpha_{ij}$ vanishes if and only if the double pole along $E_{P_j}$ is absent, i.e., if $\nabla_{\partial/\partial t_j}\eta_i$ lies in $H^0(\Omega^2_S(2E_{P_i}))_{\mathrm{2nd\,kind}}$. By the classical theory of Schiffer variations $($cf.~\cite[p.~443]{Gar49}$)$, $\alpha_{ij}$ is proportional to $\omega(P_i)\omega(P_j)/(P_i-P_j)^2$, where $\omega = \mathrm{d}x\wedge\mathrm{d}z/(2y)$ generates $H^{2,0}$ locally. For distinct branch points $P_i\neq P_j$ with $\omega(P_i),\omega(P_j)\neq 0$ $($which holds generically for $n\ge 3$$)$, this is nonzero, so $\alpha_{ij}\neq 0$.

    \medskip

    Since $\alpha_{jm}$ is proportional to $\omega(P_j)\omega(P_m)/(P_j-P_m)^2$, the linear independence follows by a degree count. Suppose $\sum_{j\in A} c_j \alpha_{jm} = 0$ for all $m\in B$. Factoring out $\omega(P_m)\neq 0$, the rational function
    \[
    F(z) \;=\; \sum_{j\in A} \frac{c_j\,\omega(P_j)}{(z-P_j)^2}
    \]
    vanishes at all $z = P_m$ for $m\in B$. Putting $F(z)$ over a common denominator $\prod_{l\in A} (z-P_l)^2$, the numerator is a polynomial of degree at most $2(|A|-1) \le 2(n-2) = 2n-4$. However, $F(z)$ has roots at the $|B| \ge 8n+1$ points $P_m$. Since $8n+1 > 2n-4$ for all $n \ge 3$, the polynomial has more roots than its maximum degree, so the numerator polynomial must be identically zero. Evaluating this identically zero numerator at $z=P_j$ gives $c_j\,\omega(P_j)\prod_{l\in A,\,l\neq j} (P_j-P_l)^2 = 0$. Since the branch points are distinct and $\omega(P_j)\neq 0$, this forces $c_j = 0$ for all $j\in A$.
    \end{proof}



    \begin{thm}\label{thm:KS-transversality}
    Let $k = \Cb$, $K = \Cb(t)$, and $n \ge 3$. For every primitive $\gamma \in \Lambda_n$ with $\gamma^2 < 0$, the period map $\Phi \colon \cW_n^{\mathrm{st}} \to O(\Lambda_n, \bZ)\backslash\cD_n$ is transverse to the Heegner locus $\cH_d$. Equivalently, the Noether--Lefschetz locus
    \[
    \Phi^{-1}(\cH_d) \;\subset\; \cW_n^{\mathrm{st}}
    \]
    is either empty or every irreducible component has codimension exactly $n-1$. 

    Concretely, the composite
    \[
    F_\gamma \circ \Phi \colon \cW_n^{\mathrm{st}} \longrightarrow \Cb^{n-1}, 
    \qquad [S] \mapsto \bigl((\gamma, \omega_0), \ldots, (\gamma, \omega_{n-2})\bigr),
    \]
    has rank $n-1$ at every general point of $\Phi^{-1}(\cH_d)$, where $\omega_0, \ldots, \omega_{n-2}$ is a basis of $H^{2,0}(S)$.
    \end{thm}

    \begin{proof}
    The Kodaira--Spencer map $\kappa\colon T_{[S]}\cW_n^{\mathrm{st}}\to \Hom(H^{2,0}(S),\,H^{1,1}_{\mathrm{prim}}(S))$ is the $(1,1)$-projection of the Gauss--Manin derivative $\kappa(\partial/\partial t_i)(\omega) = (\nabla_{\partial/\partial t_i}\omega)^{1,1}$ for $\omega\in H^{2,0}(S)$. We first show $\mathrm{Im}(\kappa)\not\subset\gamma^\perp$. At a general surface, $\gamma$ is not of type~$(1,1)$; write its Hodge decomposition $\gamma=\gamma^{2,0}+\gamma^{1,1}+\gamma^{0,2}$ in $\Lambda_n\otimes\Cb$. The intersection form is positive definite on $H^{2,0}\oplus H^{0,2}$ and negative definite on $H^{1,1}_{\mathrm{prim}}$ by the Hodge--Riemann bilinear relations. Since $\gamma^2<0$, the negative-definite part must dominate, forcing $\gamma^{1,1}\neq 0$. Since $\eta_i\in H^{1,1}_{\mathrm{prim}}$ pairs trivially with $H^{2,0}\oplus H^{0,2}$, we have $(\gamma,\eta_i)=(\gamma^{1,1},\eta_i)$, and the orthogonality of the $\eta_i$ gives $(\gamma,\eta_i)\neq 0$ for some~$i$. Choosing $\omega\in H^{2,0}(S)\setminus H^0(\Omega^2_S(-E_{P_i}))$ so that $\kappa(\partial/\partial t_i)(\omega)=c_i(\omega)\,\eta_i$ with $c_i(\omega)\neq 0$,
    \[
    \bigl(\gamma,\;\kappa(\partial/\partial t_i)(\omega)\bigr)
    \;=\;
    c_i(\omega)\cdot(\gamma,\eta_i)
    \;\neq\; 0.
    \]

    For the full rank statement, we compute the Jacobian of $F_\gamma\circ\Phi$ on the Heegner locus. Since $\gamma$ is a flat section of the local system, differentiating the period integral $f_j=(\gamma,\omega_j)$ gives $\partial_{t_i}f_j = (\gamma,\nabla_{\partial/\partial t_i} \omega_j)$. By Griffiths transversality, the Gauss--Manin derivative $\nabla_{\partial/\partial t_i}\omega_j$ decomposes into an $H^{2,0}$ part and an $H^{1,1}$ part; by~\cite[Thm.~4.12(1)]{SB_Torelli} the $H^{1,1}$ part is proportional to~$\eta_i$. The $H^{2,0}$ part pairs with $\gamma$ to give a linear combination of the period integrals $(\gamma,\omega_k)$, all of which vanish on the Heegner locus $\gamma\perp H^{2,0}$. The Jacobian therefore reduces to
    \[
    \partial_{t_i}f_j\big|_{\gamma\perp H^{2,0}} 
    \;=\; c_{ij}\cdot(\gamma,\eta_i),
    \]
    where $c_{ij}\neq 0$ precisely when $\omega_j$ does not vanish at~$P_i$. Taking the monomial basis $\omega_j=z^j\cdot\mathrm{d}x\wedge\mathrm{d}z/(2y)$ of $H^{2,0}(S)$ and setting $\omega_0 = \mathrm{d}x\wedge\mathrm{d}z/(2y)$, the Gauss--Manin derivative acts as $\nabla_{\partial/\partial t_i}(\omega_j) = z^j \nabla_{\partial/\partial t_i}(\omega_0)$, which evaluates at the singular fiber $E_{P_i}$ to give the exact factorization $c_{ij} = c_i(\omega_0)P_i^j$. On the Heegner locus $\gamma$ is of type~$(1,1)$, hence $\gamma=\gamma^{1,1}\neq 0$ in $H^{1,1}_{\mathrm{prim}}$; since the $\eta_i$ span $H^{1,1}_{\mathrm{prim}}$, at every point of $\Phi^{-1}(\cH_d)$ at least one pairing $g_i(S)\coloneqq(\gamma,\eta_i(S))$ is nonzero.

    \medskip

    We claim that at a general point at least $n-1$ pairings are nonzero. Suppose for contradiction that fewer than $n-1$ pairings are nonzero on some irreducible component $V_0$ of $\Phi^{-1}(\cH_d)$. Let
    \[
    Z \;\coloneqq\; \{m \in \{1,\ldots,N\} : g_m \equiv 0 \text{ on } V_0\},
    \]
    so that $|Z| \ge N - (n-2) = 9n$ by assumption, and $\gamma$ expands on $V_0$ as $\gamma = \sum_{j \notin Z} c_j \eta_j$ with at most $n-2$ nonzero terms.

    \medskip

    The vanishing $g_m \equiv 0$ on $V_0$ implies that $dg_m|_p$ lies in the conormal space $N_p^* V_0$ at every general smooth point $p \in V_0$, and by Krull's height theorem (or equivalently \cite[Prop.~5.14]{Voisin2}) $\dim N_p^* V_0 \le n-1$. We compute the structure of $dg_m$ for $m \in Z$. By the Leibniz rule and orthogonality of the $\eta$-basis, $\partial_{t_i} g_m = (\gamma, \nabla_{\partial/\partial t_i}\eta_m)$ is supported on the indices $i = m$ and $i \notin Z$: indeed, for $i \in Z \setminus \{m\}$, \cite[Thm.~4.11(1)]{SB_Torelli} expresses $\nabla_{\partial/\partial t_i}\eta_m$ as a linear combination of $\eta_i$ and $\eta_m$, both of which pair to zero against $\gamma$ on $V_0$. Writing $dg_m = a_m\,dt_m + w_m$ with $w_m \in W \coloneqq \mathrm{span}\{dt_i : i \notin Z\}$, the coefficient is \[
    a_m \;=\; -(\eta_m, \eta_m) \sum_{j \notin Z} c_j \alpha_{jm}.
    \]

    \medskip

    In the quotient $T_p^*/W$, the projections $a_m\,dt_m$ for distinct $m \in Z$ are linearly independent, yet all lie in the image of $N_p^* V_0$, which has dimension $\le n-1$. Hence at most $n-1$ of the $a_m$ are nonzero, so the subset
    \[
    Z' \;\coloneqq\; \{m \in Z : a_m = 0\} 
    \;=\; \Bigl\{m \in Z : \textstyle\sum_{j \notin Z} c_j \alpha_{jm} = 0\Bigr\}
    \]
    has $|Z'| \ge |Z| - (n-1) \ge 8n+1$. Applying Lemma~\ref{lem:coupling-nonvanishing} with $A = \{1,\ldots,N\} \setminus Z$ (of size $\le n-2 < n-1$) and $B = Z'$ (of size $\ge 8n+1$) forces $c_j = 0$ for all $j \notin Z$, giving $\gamma = 0$ and contradicting $\gamma^2 < 0$. Therefore at least $n-1$ pairings $g_i$ are not identically zero on $V_0$. For these pairings, the zero locus $\{g_i = 0\} \cap V_0$ is proper, so at a general point of $\Phi^{-1}(\cH_d)$ at least $n-1$ pairings are simultaneously nonzero.

    \medskip

    Choosing $n-1$ indices $i_1,\ldots,i_{n-1}$ with $(\gamma,\eta_{i_k})\neq 0$, the $(n-1)\times(n-1)$ Jacobian matrix $J_{j,k} = \partial_{t_{i_k}}f_j$ factors exactly as $\mathrm{Vand}(P_{i_1},\ldots,P_{i_{n-1}}) \cdot \mathrm{diag}\bigl(c_{i_k}(\omega_0)(\gamma,\eta_{i_k})\bigr)$. Since the base form $\omega_0$ generically does not vanish at the branch points, $c_{i_k}(\omega_0) \neq 0$. Both factors are therefore nonsingular, so $d(F_\gamma\circ\Phi)$ has rank~$n-1$ at every general point of $\Phi^{-1}(\cH_d)$, and the implicit function theorem gives the expected codimension.

    \end{proof}

    \begin{cor}\label{cor:KS-realization}
    For every $n\ge 3$, there exist infinitely many integers $d\ge 1$ for which there is a stable elliptic surface $\pi\colon S\to\Pb^1$ at Faltings height~$n$ such that $\rk(E_S/K)\ge 1$ and the Mordell--Weil lattice contains a section of canonical height $\hat{h}(P)=d$.
    \end{cor}

    \begin{proof}
    Theorem~\ref{thm:KS-transversality} establishes that the period map is nonconstant. By Cattani--Deligne--Kaplan~\cite{CDK}, the Noether--Lefschetz locus $\bigcup_{d\ge 1}\Phi^{-1}(\cH_d)$ is a countable union of closed algebraic subvarieties. To show $\Phi^{-1}(\cH_d) \neq \varnothing$ for infinitely many $d$, fix $[S] \in \cW_n^{\mathrm{st}}$ very general, with geometric Picard rank $2$ so that $\Lambda_n \cap H^{1,1}_{\mathrm{prim}}(S) = \{0\}$. Let $V_\Rb = (H^{2,0} \oplus H^{0,2}) \cap H^2(S, \Rb)$, of dimension $2(n-1)$.

    We claim that the orthogonal projection of $\Lambda_n$ to $V_\Rb$ has dense image. Suppose otherwise: the closure of the projection is a proper closed subgroup of $V_\Rb$. By the structure theorem for closed subgroups of $\Rb^d$, this closure has the form $W \oplus L$ where $W \subset V_\Rb$ is a proper linear subspace and $L \subset W^\perp$ is a discrete subgroup (a lattice in some subspace of $W^\perp$). Hence there exists a nonzero linear functional $\ell$ on $V_\Rb$ that vanishes on $W$ and takes integer values on $L$, hence integer values on the closure, hence on the projection of $\Lambda_n$. Since $Q$ is positive-definite on $V_\Rb$, this functional is $\ell = Q(\,\cdot\,, \alpha)$ for some nonzero $\alpha \in V_\Rb$. Extending $\alpha$ by zero on $H^{1,1}_{\mathrm{prim}, \Rb}$, the condition $Q(\gamma, \alpha) \in \bZ$ for all $\gamma \in \Lambda_n$ places $\alpha$ in the dual lattice $\Lambda_n^*$, hence $\alpha \in \Lambda_n \otimes \Qb$.

    Thus $\alpha$ is a nonzero rational class in $V_\Rb \subset H^{2,0}(S) \oplus H^{0,2}(S)$, i.e., its $(1,1)$-component in the Hodge decomposition of $S$ vanishes. For each fixed nonzero $\alpha \in \Lambda_n \otimes \Qb$, the locus
    \[
    \{[S'] \in \cD_n : \alpha^{1,1}(S') = 0\}
    \]
    is a proper analytic subvariety of the period domain, defined by the closed condition $\alpha \in F^2(S') + \overline{F^2(S')}$ on the holomorphically varying Hodge filtration. It is proper because the monodromy representation on $\Lambda_n \otimes \Qb$ is irreducible (on~$\cW_n^{\mathrm{st}}$ all singular fibers are nodal, so the Picard--Lefschetz irreducibility theorem~\cite[Ch.~3, Thm.~3.27]{Voisin2} applies): no nonzero rational class can stay in the proper subspace $V_\Rb$ throughout the period image. Since $[S]$ is very general --- outside the countable union, over all nonzero $\alpha \in \Lambda_n \otimes \Qb$, of these proper subvarieties --- no such $\alpha$ exists, contradicting the construction of $\alpha$ in the previous paragraph. The projection of $\Lambda_n$ to $V_\Rb$ is therefore dense, and $\Lambda_n$ contains sequences of primitive vectors $\gamma$ with $|f_j(\gamma)| = |(\gamma, \omega_j)| \to 0$ for all $j$, as $d = -\gamma^2/2 \to \infty$.

    We construct sequences with prescribed limit direction. Setting $\tilde f_j = f_j / \|\gamma\|$ does not change the zero set. By density of the $V_\Rb$-projection (previous paragraph), there exist primitive $\gamma \in \Lambda_n$ with $V_\Rb$-projection arbitrarily small; and the set of normalized primitive vectors $\gamma/\|\gamma\|$ is dense in the unit sphere of $H^2_{\mathrm{prim}}(S, \Rb)$ since $\Lambda_n$ has full rank. Combining the two, we choose primitive $\gamma_k \in \Lambda_n$ with $|(\gamma_k, \omega_j)| \to 0$ for all $j$, $\|\gamma_k\| \to \infty$, and $\gamma_k/\|\gamma_k\| \to \hat\gamma_\infty$ for any prescribed unit direction $\hat\gamma_\infty \in H^{1,1}_{\mathrm{prim}}(S)_\Rb$.

    The asymptotic Jacobian of $\tilde f$ at $[S]$, taken with respect to a choice of $n-1$ branch-point coordinates $t_{i_1}, \ldots, t_{i_{n-1}}$, is by Theorem~\ref{thm:KS-transversality} the $(n-1) \times (n-1)$ matrix
    \[
    J^\infty_{j, k} \;=\; c_{i_k}(\omega_0) \cdot P_{i_k}^j \cdot 
    (\hat\gamma_\infty, \eta_{i_k}),
    \]
    which factors as a Vandermonde times a diagonal. Choosing $\hat\gamma_\infty$ outside the finitely many hyperplanes $\eta_{i_k}^\perp$ in $H^{1,1}_{\mathrm{prim}}(S)_\Rb$ makes each $(\hat\gamma_\infty, \eta_{i_k})$ nonzero, so $J^\infty$ is nonsingular.

    We apply Smale's $\alpha$-theorem~\cite[Thm.~A]{Smale} (a quantitative implicit function theorem) to $\tilde f$ at $[S]$ for $k \gg 0$. The theorem gives an actual zero $[S_k] \in \cW_n^{\mathrm{st}}$ of $\tilde f$ near $[S]$ provided the Smale invariant $\alpha$ --- the product of the Newton step length $\beta$ and the higher-derivative invariant of $\tilde f$ at $[S]$ --- is below the universal threshold $\alpha_0$. The Newton step is bounded by
    \[
    \beta \;=\; \|D\tilde f([S])^{-1} \tilde f([S])\| 
    \;\le\; \|D\tilde f([S])^{-1}\| \cdot \|\tilde f([S])\|,
    \]
    where $\|D\tilde f([S])^{-1}\|$ is uniformly bounded in $k$ since $D\tilde f([S]) \to J^\infty$ is nonsingular, and $\|\tilde f([S])\| \to 0$ since $|f_j(\gamma_k)| \to 0$ and $\|\gamma_k\| \to \infty$. The higher-derivative invariant is uniformly bounded by analyticity of the period map on a compact neighborhood of $[S]$. Hence $\alpha \to 0$, eventually below $\alpha_0$, and Smale's theorem produces $[S_k]$ near $[S]$ with $\tilde f([S_k]) = 0$. This gives $\Phi^{-1}(\cH_{d_k}) \neq \varnothing$ for $d_k = -\gamma_k^2/2$.

    By Theorem~\ref{thm:KS-transversality}, each nonempty $\Phi^{-1}(\cH_d)$ has codimension exactly $n-1$. On the stable stratum $R(\mathfrak{f}) = 0$, so every newly algebraic class $\gamma_k \in \Lambda_n$ with $\gamma_k^2 = -2d_k$ projects nontrivially to $\MW(E_{S_k}/K)$ by Theorem~\ref{thm:MW-modularity}(2), giving $\rk(E_{S_k}/K) \ge 1$ and a section of canonical height $\hat h(P) = d_k$.
    \end{proof}

    \subsection{Higher Mordell--Weil rank}

    The argument of Corollary~\ref{cor:KS-realization} extends to $r$ simultaneous algebraic classes.

    \begin{thm}\label{thm:KS-higher-rank}
    Let $n\ge 3$. For every integer
    \[
    1 \;\le\; r \;\le\; \left\lfloor \frac{10n-2}{n-1} \right\rfloor 
    \;=\; \left\lfloor \frac{\dim\cW_n^{\mathrm{st}}}{h^{2,0}(S)} \right\rfloor,
    \]
    there exist infinitely many stable elliptic surfaces $\pi\colon S\to\Pb^1$ at Faltings height~$n$ with Mordell--Weil rank $\rk(E_S/K)\ge r$.
    \end{thm}

    \begin{proof}
    We seek a surface $[S_m] \in \cW_n^{\mathrm{st}}$ near a general $[S]$ at which $r$ independent primitive classes $\gamma_1, \ldots, \gamma_r \in \Lambda_n$ become simultaneously Hodge classes; equivalently, the $M = r(n-1)$ period equations
    \[
    f_{a,j} = (\gamma_a, \omega_j) = 0, \qquad 1 \le a \le r, \ 0 \le j \le n-2,
    \]
    are simultaneously satisfied. By the hypothesis $r \le \lfloor (10n-2)/(n-1) \rfloor$, we have $M \le 10n-2 = \dim\cW_n^{\mathrm{st}}$. Choose $M$ branch-point coordinates $t_{i_1}, \ldots, t_{i_M}$ to vary (fixing the remaining coordinates at $[S]$); the period equations $\{f_{a,j}\}$ then form an $M \times M$ system whose Jacobian is square.

    The construction of Corollary~\ref{cor:KS-realization} produces, for each prescribed limit direction in $H^{1,1}_{\mathrm{prim}}(S)_\Rb$, a sequence of primitive vectors $\gamma_m \in \Lambda_n$ with $|f_j(\gamma_m)| \to 0$, $\|\gamma_m\| \to \infty$, and $\gamma_m/\|\gamma_m\|$ converging to that direction. The genericity condition on $[S]$ does not depend on the choice of limit direction, so the same $[S]$ supports the construction for every limit direction in $H^{1,1}_{\mathrm{prim}}(S)_\Rb$. Each construction produces infinitely many primitive vectors, so for $r$ distinct limit directions $\hat\gamma_{1,\infty}, \ldots, \hat\gamma_{r,\infty} \in H^{1,1}_{\mathrm{prim}}(S)_\Rb$ we may choose the sequences $\gamma_{a,m}$ pairwise disjoint while preserving the limit properties.

    We now construct the limit Jacobian. Normalize the period equations by $\tilde f_{a,j} = f_{a,j}/\|\gamma_{a,m}\|$ (same zero set). The Jacobian $D\tilde f([S])$ is an $M \times M$ matrix whose rows are indexed by the equations $\tilde f_{a,j}$ (with $1 \le a \le r$ and $0 \le j \le n-2$) and whose columns are indexed by the variables $t_{i_k}$ (with $1 \le k \le M$). The entry in row $(a, j)$ and column $k$ converges, as $m \to \infty$, to
    \[
    J^\infty_{(a,j),\, k} \;=\; c_{i_k}(\omega_0) \cdot P_{i_k}^j 
    \cdot (\hat\gamma_{a,\infty}, \eta_{i_k}),
    \]
    by Theorem~\ref{thm:KS-transversality}. We must show that $\det J^\infty \neq 0$ for some choice of limit directions.

    The determinant $\Delta \coloneqq \det J^\infty$ is a polynomial in the pairing variables $X_{a,k} = (\hat\gamma_a, \eta_{i_k})$, where $\hat\gamma_a$ now ranges over $H^{1,1}_{\mathrm{prim}}(S, \Cb)$. Partition $\{1, \ldots, M\}$ into $r$ disjoint blocks $C_1, \ldots, C_r$ of size $n-1$, and set $\hat\gamma_a = \sum_{k \in C_a} \eta_{i_k}$ for each $a$. By orthogonality of the $\eta$-basis, this gives $X_{a,k} = (\eta_{i_k}, \eta_{i_k})$ for $k \in C_a$ and $0$ otherwise, so $J^\infty$ is block-diagonal with $r$ blocks of size $(n-1) \times (n-1)$. The $a$-th block has determinant
    \[
    \mathrm{Vand}\bigl(\{P_{i_k}\}_{k \in C_a}\bigr) \cdot 
    \prod_{k \in C_a} c_{i_k}(\omega_0) \, (\eta_{i_k}, \eta_{i_k}),
    \]
    which is nonzero generically: the Vandermonde by distinctness of branch points, the $c_{i_k}(\omega_0)$ by non-vanishing of $\omega_0 \in H^0(\mathcal{O}(n-2))$ at the branch points (a generic condition for $n \ge 3$), and the $(\eta_{i_k}, \eta_{i_k})$ by Hodge--Riemann negative-definiteness. Hence $\Delta$ is not the zero polynomial. Since the pairing variables $X_{a,k}$ restrict to real-valued coordinates on the real form $(H^{1,1}_{\mathrm{prim}}(S)_\Rb)^r$, the polynomial $\Delta$ does not vanish identically there, so its non-vanishing locus on the real form is Zariski-open dense. Choose $\Rb$-linearly independent $\hat\gamma_{1,\infty}, \ldots, \hat\gamma_{r,\infty}$ in this open subset.

    For this choice of real sequences, Smale's $\alpha$-theorem \cite[Thm.~A]{Smale} applies to $\tilde f$ at $[S]$ by the same bounds as in Corollary~\ref{cor:KS-realization}: the Newton step $\beta \to 0$ since $\|D\tilde f([S])^{-1}\|$ is uniformly bounded in $m$ (by nonsingularity of $J^\infty$) and $\|\tilde f([S])\| \to 0$, while the higher-derivative invariant is uniformly bounded by analyticity of the period map. Hence $\alpha < \alpha_0$ for $m \gg 0$, and Smale's theorem produces a zero $[S_m] \in \cW_n^{\mathrm{st}}$ of $\tilde f$ near $[S]$.

    At $[S_m]$, each $\gamma_{a,m}$ is a Hodge class. The vectors $\gamma_{a,m}/\|\gamma_{a,m}\|$ converge to the $\Rb$-linearly independent directions $\hat\gamma_{a,\infty}$, so for $m \gg 0$ the $\gamma_{a,m}$ are $\Rb$-linearly independent in $\Lambda_n \otimes \Rb$, hence $\Qb$-linearly independent. On the stable stratum $R(\mathfrak f) = 0$, so by Theorem~\ref{thm:MW-modularity}(2) each $\gamma_{a,m}$ projects nontrivially to $\MW(E_{S_m}/K)$. The $r$ classes therefore generate a rank-$r$ sublattice of $\MW(E_{S_m}/K)$, giving $\rk(E_{S_m}/K) \ge r$.
    \end{proof}

\smallskip


    \section*{Acknowledgements}
    
    Warm thanks to Dori Bejleri, François Greer, Klaus Hulek, Scott Mullane and Matthew Satriano for helpful discussions. The author is especially grateful to Dori Bejleri for explaining the connection between Borcherds modularity and Mordell--Weil rank jumps on elliptic K3 surfaces, and to François Greer and Klaus Hulek for their careful reading of an earlier draft. The author was partially supported by the ARC grant DP210103397 and the Sydney Mathematical Research Institute.

\smallskip



    \bibliographystyle{alpha}
    \bibliography{main.bib}

    \vspace{+19pt}

    \noindent Jun--Yong Park \enspace -- \enspace \texttt{june.park@sydney.edu.au} \\
    \textsc{School of Mathematics and Statistics, University of Sydney, Australia} \\

\end{document}